\DeclareSymbolFont{bbold}{U}{bbold}{m}{n}
\DeclareMathSymbol{\bbzero}{\mathord}{bbold}{`0}
\DeclareMathSymbol{\bbone}{\mathord}{bbold}{`1}
\newcommand{\bbI}{\mathbb I}
\newcommand{\bbL}{\mathbb L}
\newcommand{\bbZ}{\mathbb Z}
\newcommand{\mcC}{\mathcal C}
\newcommand{\mcD}{\mathcal D}
\newcommand{\mcF}{\mathcal F}
\newcommand{\mcI}{\mathcal I}
\newcommand{\mcJ}{\mathcal J}
\newcommand{\mcM}{\mathcal M}
\newcommand{\mcN}{\mathcal N}
\newcommand{\mcP}{\mathcal P}
\newcommand{\mcQ}{\mathcal Q}
\newcommand{\mcV}{\mathcal V}
\newcommand{\sfB}{\mathsf B}
\newcommand{\sfI}{\mathsf I}
\newcommand{\Ab}{\mathsf{Ab}}
\newcommand{\cosimp}{\mathsf{c}}
\newcommand{\Cat}{\mathsf{Cat}}
\newcommand{\Ch}{\mathsf{Ch}}
\newcommand{\ob}{\mathop\mathsf{ob}\nolimits}
\newcommand{\simp}{\mathsf{s}}
\newcommand{\Set}{\mathsf{Set}}
\newcommand{\sAb}{\simp\Ab}
\newcommand{\sSet}{\simp\Set}
\newcommand{\Ho}{\mathop\mathrm{Ho}\nolimits}
\renewcommand{\hom}{\mathop\mathsf{hom}\nolimits}
\newcommand{\op}{\mathrm{op}}
\newcommand{\Ra}{\Rightarrow}
\newcommand{\xra}[1]{\xrightarrow{#1}}
\newcommand{\ra}{\rightarrow}
\newcommand{\da}{\downarrow}
\newcommand{\hra}{\hookrightarrow}
\newcommand{\xlra}[1]{\xrightarrow{\ #1\ }}
\newcommand{\xlla}[1]{\xleftarrow{\ #1\ }}
\newcommand{\lra}{\longrightarrow}
\newcommand{\lla}{\longleftarrow}
\newcommand{\embed}[1]{\xymatrix@1{{}\ar@{c->}[r]^{#1} & {}}}
\newcommand{\colim}{\mathop\mathrm{colim}}
\newcommand{\const}{\mathop\mathrm{cst}\nolimits}
\newcommand{\fib}{\mathop\mathrm{fib}\nolimits}
\newcommand{\hocolim}{\mathop\mathrm{hocolim}}
\newcommand{\id}{\mathrm{id}}
\newcommand{\Id}{\mathrm{Id}}
\newcommand{\pcm}{\mathop\mathrm{pcm}\nolimits}
\newlength{\hlp}
\theoremstyle{plain}
\newtheorem{theorem}{Theorem}
\newtheorem{lemma}[theorem]{Lemma}
\newtheorem{proposition}[theorem]{Proposition}
\newtheorem*{scorollary}{Corollary}
\newtheorem*{observation}{Observation}
\theoremstyle{definition}
\newtheorem*{sexample}{Example}
\newtheorem*{sdefinition}{Definition}
\theoremstyle{remark}
\newtheorem*{remark}{Remark}
\newtheorem*{remarks}{Remarks}
\renewcommand{\rightarrowtail}{\xymatrix@1@=15pt{\ar@{ >->}[r]&}}
\newcommand{\cof}{\mathrm{cof}}
\renewcommand{\fib}{\mathrm{fib}}
\newcommand{\col}{\;\!{:{}}}
\newcommand{\loc}{{{}:}\;\!}
\begin{document}

\title{Homotopy weighted colimits}
\author{Luk\'a\v{s} Vok\v{r}\'inek}
\email{koren@math.muni.cz}
\address{Department of Mathematics and Statistics\\Masaryk University\\Kotl\'a\v rsk\'a 2\\611 37 Brno\\Czech Republic}
\date{\today}
\thanks{
The research was supported by the project CZ.1.07/2.3.00/20.0003 of the Czech Ministry of Education and by the grant P201/11/0528 of the Czech Science Foundation (GA \v CR)}
\subjclass[2000]{18D20, 18G55}

\keywords{model category, enriched category, weighted colimit, homotopy colimit, tensor}
\begin{abstract}
Let $\mcV$ be a cofibrantly generated closed symmetric monoidal model category and $\mcM$ a model $\mcV$-category. We say that a weighted colimit $W\star_\mcC D$ of a diagram $D$ weighted by $W$ is a homotopy weighted colimit if the diagram $D$ is pointwise cofibrant and the weight $W$ is cofibrant in the projective model structure on $[\mcC^\op,\mcV]$. We then proceed to describe such homotopy weighted colimits through homotopy tensors and ordinary (conical) homotopy colimits. This is a homotopy version of the well known isomorphism $W\star_\mcC D\cong\smallint\nolimits^\mcC(W\otimes D)$. After proving this homotopy decomposition in general we study in some detail a few special cases. For simplicial sets tensors may be replaced up to weak equivalence by conical homotopy colimits and thus the weighted homotopy colimits have no added value. The situation is completely different for model dg-categories where the desuspension cannot be constructed from conical homotopy colimits. In the last section we characterize those $\mcV$-functors inducing a Quillen equivalence on the enriched presheaf categories.
\end{abstract}

\maketitle

\section*{Introduction}

This paper is concerned with the study of homotopy colimits and their enriched versions. These are defined in a model $\mcV$-category, i.e.~a model category enriched in a closed symmetric monoidal model category $\mcV$. Instead of deriving the weighted colimit functor we simply describe cases in which the weighted colimit is homotopy invariant and call it then the homotopy weighted colimit. Ordinary homotopy colimits in simplicial model categories may be defined as particular (homotopy) weighted colimits via the so-called Bousfield-Kan formula. We first prove its generalization to an arbitrary base $\mcV$.

In the opposite direction we express homotopy weighted colimits via ordinary homotopy colimits and (again homotopy invariant cases of) tensors. This should be compared with the well known non-homotopy version $W\star_\mcC D\cong\smallint\nolimits^\mcC(W\otimes D)$ where the coend is essentially a (reflexive) coequalizer and a further tensor.

In the second section we study a few examples of enrichments in which we compare the expressive power of homotopy weighted colimits with that of ordinary homotopy colimits. Simplicial sets provide a basic example -- they act in an ``up to homotopy'' way on every model category and they also form a ``free model category on one generator''. In this respect one should expect that the enrichment in simplicial sets does not give anything in addition. From the non-homotopy point of view the weighted colimits over simplicial sets are generated by conical colimits and tensors with standard simplices. The same holds homotopically except tensors with standard simplices are now trivial. As a result one might compute homotopy weighted colimits by ordinary homotopy colimits although the diagram needs to be replaced by a weakly equivalent one. Otherwise there might be too few morphisms.

Our second example are abelian groups. In the non-homotopy world $\Ab$-weighted colimits can be described via conical colimits. The same holds in the homotopy setup $\mcV=\sAb$ of simplicial abelian groups or equivalently the non-negatively graded chain complexes. The unbounded case happens to be more interesting -- there are homotopy weighted colimits (namely desuspension, i.e.~the tensor with $\bbZ[-1]$) which cannot be recovered from ordinary homotopy colimits. Over $\Cat$ with its categorical model structure the tensor with the arrow category $[1]$ is also not expressible by ordinary homotopy colimits, i.e.~pseudocolimits.

In the third section we prove a version of a theorem of Dwyer and Kan about $\mcV$-functors $F\col\mcD\ra\mcC$ inducing Quillen equivalences on the presheaf categories
\[F_!\col[\mcD^\op,\mcV]\rightleftarrows[\mcC^\op,\mcV]\loc F^*.\]
Classically for $F$ essentially surjective this happens precisely when $F$ is homotopically full and faithful. In general we characterize such functors via homotopy absolute homotopy weighted colimits and describe these completely for $\mcV=\sSet$, $\sAb$ and $\Cat$.

\section{Homotopy weighted colimits}

\subsection{Homotopy colimits}
In any model category $\mcM_0$ it is possible to define a homotopy invariant version of a colimit called a homotopy colimit. There are many definitions, we will follow the approach of \cite{Hirschhorn}. Let $D\col\mcI\ra\mcM_0$ be a pointwise cofibrant diagram, i.e.~such that for all objects $i\in\mcI$ the value $Di\in\mcM_0$ is cofibrant. The understanding here is that we will not compute a homotopy colimit of a diagram which is not pointwise cofibrant and if we ever want to we first take its cofibrant replacement. There is a Reedy model structure on $\cosimp\mcM_0=[\Delta,\mcM_0]$, the category of cosimplicial objects in $\mcM_0$. Let $\const_\Delta D\col\mcI\ra\cosimp\mcM_0$ denote the diagram whose value $(\const_\Delta D)i$ is the constant cosimplicial object on $Di$. Let now $\widetilde D$ denote a pointwise Reedy cofibrant replacement of $\const_\Delta D$. We define
\[\hocolim\nolimits_\mcI D=\int^{(n,i)\in\Delta\times\mcI}N(i\da\mcI)_n\cdot(\widetilde{D}i)^n\]
where the $\cdot$ is the ``$\Set$-valued tensor''. This means that $S\cdot D$ denotes the coproduct of as many copies of $D$ as there are elements in $S$, i.e.~$S\cdot D=\coprod_{s\in S}D$. By definition
\[N(i\da\mcI)_n=\{i\ra i_0\ra\cdots\ra i_n\},\]
the set of all chains of $(n+1)$ morphisms in $\mcI$ starting from $i$. The above coend is in fact a $\Set$-enriched weighted colimit. As such we will write it as
\[\hocolim\nolimits_\mcI D=N(-\da\mcI)\cdot_{\Delta\times\mcI}\widetilde D\]
to distinguish it from the weighted colimits enriched in other monoidal categories that we will be using later.

Note that the homotopy colimit depends on the choice of the Reedy cofibrant replacement $\widetilde D$ but we will not reflect this in the notation. A different choice leads to a weakly equivalent (cofibrant) object. This suggests that one should not think of $\hocolim\nolimits_\mcI D$ as a specific object but rather as a weak equivalence class of objects. In this respect when speaking about homotopy colimits one should never ask the homotopy colimit to be isomorphic to an object but always weakly equivalent. We will demonstrate this for the presheaf categories. Any presheaf $X\in[\mcI^\op,\Set]$ is (isomorphic to) a colimit of representable functors. There is an analogous result for homotopy colimits. We will see in the proceeding that any model category has a ``homotopy enrichment'' in $\sSet$. Therefore the corresponding result should be concerned with ``simplicial presheaves'' $X\in[\mcI^\op,\sSet]$. Indeed \cite{Dugger} proved that every such $X$ is weakly equivalent to a homotopy colimit of representables (which are embedded in $[\mcI^\op,\sSet]$ via the canonical functor $\Set\ra\sSet$ sending a set to the corresponding constant=discrete simplicial set). We will generalize this result in allowing $\mcI$ to be a simplicial category. First we explain some generalities on enriched category theory.

\subsection{Weighted colimits}
Let $\mcM$ be an arbitrary category enriched over some closed symmetric monoidal category $\mcV_0$ and denote by $\mcM_0$ the underlying ordinary category of $\mcM$. In the proceeding we simply say that $\mcM$ is a $\mcV$-category. We will distinguish the $\mcV$-category $\mcV$ from its underlying ordinary category $\mcV_0$ in the same way we do for $\mcM$ and $\mcM_0$. Let $\mcC$ be a small $\mcV$-category and $W\col\mcC^\op\ra\mcV$ a $\mcV$-functor, a so-called weight. A $W$-weighted colimit of a diagram $D\col\mcC\ra\mcM$ is an object $W\star_\mcC D$ for which there is a natural isomorphism
\[[\mcC^\op,\mcV](W,\mcM(D-,X))\cong\mcM(W\star_\mcC D,X).\]
As a special case a tensor $K\otimes M$ is a weighted colimit indexed by the $\mcV$-category with one object and ``only the identity morphism'' and whose weight sends the unique object to $K$. Therefore the natural isomorphism becomes
\[\mcV(K,\mcM(M,X))\cong\mcM(K\otimes M,X).\]
It is classical that weighted colimits can be expressed through ordinary (non-enriched, conical) colimits and tensors if these exist in $\mcM$, namely
\begin{equation} \label{eqn_weighted_colimit_to_coend}
W\star_\mcC D\cong\int\nolimits^{c\in\mcC}Wc\otimes Dc.
\end{equation}
The coend can be further simplified to a conical colimit (while introducing further tensors) since it can be computed as a coequalizer
\begin{equation} \label{eqn_weighted_colimit_to_coequalizer}
\xymatrix{
\displaystyle\coprod_{c_0,c_1}Wc_1\otimes\mcC(c_0,c_1)\otimes Dc_0 \ar@<-2pt>[r] \ar@<2pt>[r] & \displaystyle\coprod\limits_c Wc\otimes Dc \ar[r] & W\star_\mcC D.
}
\end{equation}
Most results about weighted colimits we will be using can be found in \cite{Kelly}, the above descriptions appear both on page 53.

\subsection{Model $\mcV$-categories}
If both $\mcV$ and $\mcM$ are at the same time model categories (in a compatible way to be explained) one may define homotopy weighted colimits and ask if these can be computed using homotopy tensors and conical homotopy colimits. We start with explaining the compatibility between the monoidal and the model structure of $\mcV$ which is expressed in $\mcV_0$ being a (cofibrantly generated closed symmetric) \emph{monoidal model category}. We denote the monoidal product by $\otimes$ and the unit by $\sfI$. In the following we will need a cosimplicial resolution of $\sfI$ which we denote by $\bbI$. By definition it is a Reedy cofibrant replacement of the constant cosimplicial object $\const_\Delta\sfI$ with value $\sfI$ in $\cosimp\mcV_0=[\Delta,\mcV_0]$. Typically $\sfI$ is cofibrant in which case we may assume $\bbI^0=\sfI$. In general the first compatibility axiom says that the augmentation map $\bbI^0\ra\sfI$ induces for every cofibrant object $K$ a weak equivalence
\[\bbI^0\otimes K\xra{\sim}\sfI\otimes K\cong K.\]
The second compatibility axiom says that the tensor product $\mcV_0\times\mcV_0\ra\mcV_0$ is a left Quillen bifunctor. A good reference for monoidal model categories is \cite{Hovey}.

By a model $\mcV$-category we understand a $\mcV$-category $\mcM$ whose underlying ordinary category $\mcM_0$ is a model category and such that $\mcM$ has all tensors and cotensors and these satisfy an analogue of the compatibility conditions for $\mcV$, namely the tensor product $\mcV_0\times\mcM_0\ra\mcM_0$ should be a left Quillen bifunctor and for any cofibrant $M\in\mcM$ the augmentation map
\[\bbI^0\otimes M\xra{\sim}\sfI\otimes M\cong M\]
should be a weak equivalence. In particular $\mcV$ itself is a model $\mcV$-category.

With the cosimplicial resolution at hand one may readily define a couple of useful constructions. Firstly there is an (up to isomorphism unique) cocontinuous extension of $\bbI$,
\begin{equation} \label{eqn_embedding_of_sset}
\sSet_0\lra\mcV_0, \qquad K\longmapsto K\cdot_\Delta\bbI=\int\nolimits^{n\in\Delta}K_n\cdot\bbI^n.
\end{equation}
It is a left Quillen functor by Corollary~5.4.4 of \cite{Hovey} and endows any $\mcM_0$ with an ``up to homotopy'' simplicial module structure
\begin{equation} \label{eqn_simplicial_enrichment}
\sSet_0\times\mcM_0\lra\mcV_0\times\mcM_0\xlra{-\otimes-}\mcM_0
\end{equation}
(all the required axioms are only satisfied up to weak equivalences). Further, there is a geometric realization functor $|-|\col\simp\mcM\ra\mcM$ from the category of simplicial objects in $\mcM$ to $\mcM$. It is given simply as the weighted colimit
\begin{equation} \label{eqn_geometric_realization}
|X|=\bbI\star_\Delta X.
\end{equation}
As usual with geometric realizations it is homotopy invariant (i.e.~preserves weak equivalences) only on the subcategory of Reedy cofibrant diagrams. It is classical that restricting to the subcategory $\Delta_+^\op$ whose morphisms are the opposites of injective maps of ordinals one obtains a variant which is homotopy invariant on all pointwise cofibrant diagrams,
\[\hocolim\nolimits_{\Delta^\op}X\simeq\bbI\star_{\Delta_+}X.\]
We prefer to use the geometric realization for two reasons. Firstly it is shorter and secondly it does not matter two much since our diagrams will be Reedy cofibrant under almost exactly the same conditions under which they are pointwise cofibrant.

The cosimplicial resolution $\bbI$ might be used to construct homotopy colimits of pointwise cofibrant diagrams $D\col\mcI\ra\mcM_0$ in the following way. From $\bbI$ one may construct a cosimplicial resolution $\widetilde M=\bbI\otimes M$ of any cofibrant object $M\in\mcM$. This cosimplicial resolution is functorial and thus gives a cosimplicial resolution of $D$. With respect to such cosimplicial resolutions the homotopy colimit $\hocolim_\mcI D$ is thus given as
\[\hocolim\nolimits_\mcI D=\int\nolimits^{(n,i)\in\Delta\times\mcI}N(i\da\mcI)_n\cdot(\widetilde{D}i)^n=\int\nolimits^{(n,i)\in\Delta\times\mcI}N(i\da\mcI)_n\cdot\bbI^n\otimes Di.\]
Viewing $\int\nolimits^{n\in\Delta}N(i\da\mcI)_n\cdot\bbI^n$ as the image of $N(i\da\mcI)$ in $\mcV$ as in \eqref{eqn_embedding_of_sset} and denoting the ``simplicial homotopy enrichment'' \eqref{eqn_simplicial_enrichment} again as $\otimes$ one may rewrite the above as
\[\hocolim\nolimits_\mcI D=\int\nolimits^{i\in\mcI}N(i\da\mcI)\otimes Di,\]
which is the ``Bousfield-Kan formula'' for the homotopy colimit. We will elaborate on this in Proposition~\ref{prop_conical_to_weighted}.

\subsection{Homotopy weighted colimits}
In the following $\mcI$ will stand for a small ordinary (non-enriched) category and $\mcC$ for a small $\mcV$-category. We denote by $\mcI_\mcV$ the free $\mcV$-category generated by $\mcI$. It has the same set of objects and morphisms $\mcI_\mcV(i,j)=\mcI(i,j)\cdot\sfI$. Further we denote by $\mcI^i\in[\mcI^\op,\Set]$, $\mcC^c\in[\mcC^\op,\mcV]$ the contravariant hom-functors represented by $i$ and $c$, i.e.
\[\mcI^i=\mcI(-,i),\qquad \mcC^c=\mcC(-,c).\]
In particular $\Delta^n$ will stand for the standard simplicial $n$-simplex. Dually we denote by $\mcI_i\in[\mcI,\Set]$, $\mcC_c\in[\mcC,\mcV]$ the respective covariant hom-functors.

We say that an object $K\in\mcV$ is \emph{$\mcM$-flat} if the functor $K\otimes-\col\mcM\ra\mcM$ preserves both cofibrations and trivial cofibrations. More generally a map $K\ra L$ in $\mcV$ will be called \emph{$\mcM$-flat} if the map from the pushout to the bottom right corner in the following square
\[\xymatrix{
K\otimes X \ar[r] \ar[d] & K\otimes Y \ar[d] \\
L\otimes X \ar[r] & L\otimes Y
}\]
is a (trivial) cofibration whenever $X\ra Y$ is a (trivial) cofibration. By the compatibility axiom for the model $\mcV$-category $M$ all cofibrations in $\mcV$ are $\mcM$-flat. The canonical map $0\ra\sfI$ is also $\mcM$-flat so that the unit $\sfI$ is $\mcM$-flat. Also note that if the unit $\sfI$ is cofibrant then a map is $\mcV$-flat if and only if it is a cofibration. There might still be more $\mcM$-flat maps for a particular $\mcM$.

\begin{sdefinition}
We say that a small $\mcV$-category $\mcC$ is \emph{locally $\mcM$-flat} if all the hom-objects $\mcC(c,c')$ are $\mcM$-flat. If in addition each $\id_c\col\sfI\ra\mcC(c,c)$ is an $\mcM$-flat map we say that $\mcC$ is \emph{locally${}_*$ $\mcM$-flat}. The star stands for ``pointed'' since the endo-objects are required to be $\mcM$-flat as objects of $\sfI\da\mcV_0$.
\end{sdefinition}

Let $\mcC$ be now a small locally $\mcV$-flat $\mcV$-category. Then there exists a projective model structure on the $\mcV$-category $[\mcC^\op,\mcV]$ of all $\mcV$-functors $\mcC^\op\ra\mcV$ for which fibrations and weak equivalences are pointwise, see \cite{Stephan}. The set of generating (trivial) cofibrations is given by
\[\{\mcC^c\otimes K\rightarrowtail\mcC^c\otimes L\ |\ \textrm{$c\in\mcC$, $K\rightarrowtail L$ is a generating (trivial) cofibration}\}.\]
These conditions include the case $\mcC=\mcI_\mcV$ for which $[\mcI_\mcV^\op,\mcV]_0\cong[\mcI^\op,\mcV_0]$ since any coproduct of $\sfI$'s is $\mcV$-flat. If $\mcM$ is cofibrantly generated there is also a projective model structure on $[\mcC^\op,\mcM]$ whenever $\mcC$ is locall $\mcM$-flat.

\begin{sdefinition}
Among the colimits $W\star_\mcC D$ of diagrams $D\col\mcC\ra\mcM$ weighted by $W\col\mcC^\op\ra\mcV$ we distinguish those with the diagram $D$ pointwise cofibrant and the weight $W$ cofibrant in the projective model structure on $[\mcC^\op,\mcV]$. We call such colimits \emph{homotopy weighted colimits}. As a particular case the hom-functor $\mcC^c$ is cofibrant and $\mcC^c\star_\mcC D\cong Dc$ by the Yoneda lemma. We also say that $K\otimes M$ is a \emph{homotopy tensor} if both $K\in\mcV$ and $M\in\mcM$ are cofibrant (this is a special case of a homotopy weighted colimit for $\mcC=[0]_\mcV$ and $D0=M$, $W0=K$).
\end{sdefinition}

We will now study the relationship between homotopy weighted colimits and conical (non-enriched) homotopy colimits.

Let $\mcI$ first be an ordinary category. We will use $W\star_\mcI D$ to denote $\overline W\star_{\mcI_\mcV}\overline D$ where $\overline W$ and $\overline D$ are the unique $\mcV$-functors extending $W$ and $D$ so that it is in fact a weighted colimit and for $D$ pointwise cofibrant and $W$ cofibrant even a homotopy weighted colimit. The translation \eqref{eqn_weighted_colimit_to_coend} of every weighted colimit to a coend simplifies in this case to a non-enriched coend $\smallint^\mcI(W\otimes D)$ of a bifunctor
\[W\otimes D\col\mcI^\op\times\mcI\ra\mcM_0.\]
The following proposition says that conical homotopy colimits are special cases of homotopy weighted colimits.

\begin{proposition} \label{prop_conical_to_weighted}
If $W\in[\mcI^\op,\mcV_0]$ is an arbitrary cofibrant replacement of the constant diagram $\const_{\mcI^\op}\sfI$ with value $\sfI$ and $D\in[\mcI,\mcM_0]$ a pointwise cofibrant diagram then $W\star_\mcI D$ is weakly equivalent to $\hocolim_\mcI D$.
\end{proposition}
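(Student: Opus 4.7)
The plan is to produce an explicit cofibrant replacement $\widetilde W$ of $\const_{\mcI^\op}\sfI$ whose weighted colimit with $D$ is, by direct unwinding, the Bousfield--Kan formula for $\hocolim_\mcI D$, and then to invoke homotopy invariance of the weighted colimit in the weight to compare this with an arbitrary cofibrant weight $W$. Concretely, I would set $\widetilde W=N(-\da\mcI)\cdot_\Delta\bbI$, i.e.\ the pointwise image of the simplicial presheaf $N(-\da\mcI)$ under the cocontinuous extension \eqref{eqn_embedding_of_sset}. Unwinding using \eqref{eqn_weighted_colimit_to_coend} gives
\[\widetilde W\star_\mcI D=\int\nolimits^{i\in\mcI}\Bigl(\int\nolimits^{n\in\Delta}N(i\da\mcI)_n\cdot\bbI^n\Bigr)\otimes Di=\int\nolimits^{(n,i)\in\Delta\times\mcI}N(i\da\mcI)_n\cdot\bbI^n\otimes Di,\]
which is precisely the Bousfield--Kan formula for $\hocolim_\mcI D$ spelled out at the end of the subsection on model $\mcV$-categories.

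Next I would check that $\widetilde W$ is projectively cofibrant in $[\mcI^\op,\mcV_0]$ and that the canonical collapse $\widetilde W\ra\const_{\mcI^\op}\sfI$ is a pointwise weak equivalence. Cofibrancy is inherited from the projectively cofibrant simplicial presheaf $N(-\da\mcI)$ (whose latching maps at each $i$ are free on non-degenerate chains out of $i$) under the pointwise application of the left Quillen functor \eqref{eqn_embedding_of_sset}, which carries the generating cofibrations $\mcI^i\cdot(\partial\Delta^n\hra\Delta^n)$ of $[\mcI^\op,\sSet_0]$ to generating projective cofibrations of $[\mcI^\op,\mcV_0]$. For the weak equivalence, $i\da\mcI$ has the initial object $\id_i$, so $N(i\da\mcI)$ is simplicially contractible; Ken Brown's lemma applied to the left Quillen functor \eqref{eqn_embedding_of_sset} then yields $\widetilde W i=N(i\da\mcI)\cdot_\Delta\bbI\xra{\sim}\ast\cdot_\Delta\bbI=\bbI^0$, and the compatibility axiom of the model $\mcV$-category $\mcV$ gives $\bbI^0\xra{\sim}\sfI$.

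Finally, given an arbitrary cofibrant replacement $W\xra{\sim}\const_{\mcI^\op}\sfI$, lifting against the trivial fibration $\widetilde W\xra{\sim}\const_{\mcI^\op}\sfI$ (using cofibrancy of $W$) produces a weak equivalence $W\xra{\sim}\widetilde W$ between projectively cofibrant weights, and the conclusion $W\star_\mcI D\simeq\widetilde W\star_\mcI D=\hocolim_\mcI D$ will follow from homotopy invariance of $(-)\star_\mcI D$ on projectively cofibrant weights when $D$ is pointwise cofibrant. This last invariance is the main obstacle. One reduces to it by observing that $(-)\star_\mcI D$ sends a generating projective cofibration $\mcI^i\otimes(K\hra L)$ to $Di\otimes(K\hra L)$ via the Yoneda reduction $\mcI^i\star_\mcI D\cong Di$, which is a (trivial) cofibration by the pushout-product axiom for $\mcV_0\times\mcM_0\ra\mcM_0$ built into the definition of a model $\mcV$-category; closing under pushouts, transfinite compositions, and retracts (weighted colimits being colimits in the weight variable) shows $(-)\star_\mcI D$ preserves (trivial) cofibrations between projectively cofibrant weights, after which Ken Brown's lemma finishes the comparison.
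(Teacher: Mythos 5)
Your proof is correct and follows essentially the same route as the paper's: both identify $\widetilde W=N(-\da\mcI)\cdot_\Delta\bbI$ as an explicit projectively cofibrant replacement of $\const_{\mcI^\op}\sfI$ whose weighted colimit against $D$ is the Bousfield--Kan homotopy colimit, verify its cofibrancy and pointwise weak equivalence via the left Quillen embedding \eqref{eqn_embedding_of_sset}, and check on the generating (trivial) cofibrations that $-\star_\mcI D$ is left Quillen, so that Ken Brown's lemma transfers the comparison of weights to a weak equivalence of weighted colimits. The only (cosmetic) slip is that the augmentation $\widetilde W\ra\const_{\mcI^\op}\sfI$ need not be a fibration, so rather than lifting $W$ directly against it you should first factor it, or simply note that any two cofibrant replacements are connected by a zig-zag of weak equivalences between cofibrant weights; nothing downstream changes.
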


In terms of enriched categories the proposition says that the homotopy colimit of $D$ is weakly equivalent to $\overline W\star_{\mcI_\mcV}\overline D$. We may thus view conical homotopy colimits as special cases of homotopy weighted colimits namely those whose indexing $\mcV$-category $\mcC$ is a free $\mcV$-category $\mcI_\mcV$ on an ordinary category $\mcI$ and whose weight is a cofibrant replacement of $\overline{\const_{\mcI^\op}\sfI}$ (which only makes sense for such indexing $\mcV$-categories).

\begin{proof}
We will use the cosimplicial frame $\widetilde M=\bbI\otimes M\in\cosimp\mcM$ with respect to which the homotopy colimit of $D$ becomes
\[\hocolim\nolimits_\mcI D=N(-\da\mcI)\cdot_{\Delta\times\mcI}\widetilde D=N(-\da\mcI)\cdot_{\Delta\times\mcI}(\bbI\otimes D)\cong(N(-\da\mcI)\cdot_\Delta\bbI)\star_\mcI D.\]
Checking on the generating (trivial) cofibrations it is easy to see that
\[-\star_\mcI D\col[\mcI_\mcV^\op,\mcV]\lra\mcM\]
is left Quillen with respect to the projective model structure on $[\mcI_\mcV^\op,\mcV]$. Since $W$ is a cofibrant replacement of $\const_{\mcI^\op}\sfI$ we are left to show that $N(-\da\mcI)\cdot_\Delta\bbI$ is one too since then they must be weakly equivalent and taking the weighted colimit of $D$ yields the required weak equivalence.

To prove that $N(i\da\mcI)\cdot_\Delta\bbI\ra\bbI^0\simeq\sfI$ is a weak equivalence for each $i\in\mcI$ we use the fact that the embedding \eqref{eqn_embedding_of_sset} of $\sSet_0$ into $\mcV_0$ given by $-\cdot_\Delta\bbI$ is left Quillen and the natural map $N(i\da\mcI)\ra*$ is a weak equivalence (between cofibrant objects). As $N(-\da\mcI)$ is cofibrant in the projective model structure on $[\mcI^\op,\sSet_0]$ so is $N(-\da\mcI)\cdot_\Delta\bbI$ since the pointwise embedding
\[-\cdot_\Delta\bbI\col[\mcI^\op,\sSet_0]\lra[\mcI^\op,\mcV_0]\]
is again left Quillen (it maps $\mcI^i\cdot K$ to $\mcI^i\cdot(K\cdot_\Delta\bbI)$).
\end{proof}

In the opposite direction we will show how to decompose every homotopy weighted colimit into a conical homotopy colimit and homotopy tensors. For the following theorem we make the notation
\[\mcC(c_0,\ldots,c_n)=\mcC(c_{n-1},c_n)\otimes\cdots\otimes\mcC(c_0,c_1).\]

\begin{theorem} \label{thm_homotopy_weighted_colimits}
Suppose that the category $\mcC$ is locally${}_*$ $\mcM$-flat. Then homotopy weighted colimits in $\mcM$ may be obtained up to a weak equivalence from conical homotopy colimits and homotopy tensors. Explicitly
\begin{align*}
W\star_\mcC D & \simeq\Bigg|n\longmapsto\coprod_{c_0,\ldots,c_n\in\ob\mcC}Wc_n\otimes\mcC(c_0,\ldots,c_n)\otimes Dc_0\Bigg| \\
& \simeq\hocolim_{\substack{
[n]\in\Delta^\op \\
\makebox[0pt]{$\scriptstyle c_0,\ldots,c_n\in\ob\mcC$}
}}\ Wc_n\otimes\mcC(c_0,\ldots,c_n)\otimes Dc_0.
\end{align*}
\end{theorem}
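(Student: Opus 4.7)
I recognise the simplicial object
\[ B_\bullet = B_\bullet(W,\mcC,D), \qquad B_n \;=\; \coprod_{c_0,\ldots,c_n\in\ob\mcC} Wc_n\otimes\mcC(c_0,\ldots,c_n)\otimes Dc_0 \]
as the two-sided bar construction, whose face maps are given by composition in $\mcC$ (and by the actions of $\mcC$ on $W$ and $D$ at the endpoints) and whose degeneracies insert identities $\sfI\ra\mcC(c,c)$. The coequaliser presentation \eqref{eqn_weighted_colimit_to_coequalizer} of the weighted colimit (i.e.~the reflexive coequaliser of $d_0,d_1\col B_1\rightrightarrows B_0$) identifies the strict colimit of $B_\bullet$ with $W\star_\mcC D$. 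The task thus splits into two claims: (a) the strict colimit of $B_\bullet$ agrees up to weak equivalence with the geometric realization $|B_\bullet|=\bbI\star_\Delta B_\bullet$, and (b) $|B_\bullet|$ agrees with the displayed $\hocolim$ over $\Delta^\op$ times the chains-indexing category. Both (a) and (b) will follow from Reedy cofibrancy of $B_\bullet$, which is the one new input required beyond \eqref{eqn_weighted_colimit_to_coequalizer}.

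The main step is verifying that $B_\bullet$ is Reedy cofibrant in $\mcM$. The $n$-th latching object $L_nB_\bullet\sub B_n$ consists of those summands indexed by chains $c_0\ra c_1\ra\cdots\ra c_n$ having at least one inserted identity, i.e.~at least one factor coming from the image of $\sfI\xra{\id_{c_i}}\mcC(c_i,c_i)$. Using the Leibniz/pushout-product construction, $L_nB_\bullet\hra B_n$ is obtained by iterated pushout-products of the maps $\sfI\ra\mcC(c_{i-1},c_i)$ (tensored with $Wc_n$ on the right and $Dc_0$ on the left, and disjointly united over the choices of $c_0,\ldots,c_n$). The locally${}_*$ $\mcM$-flatness hypothesis is precisely what guarantees that each $\id_c\col\sfI\ra\mcC(c,c)$ is $\mcM$-flat; together with $\mcM$-flatness of the other hom-objects $\mcC(c_{i-1},c_i)$ (locally $\mcM$-flat) and cofibrancy of $Wc_n$ (which follows from projective cofibrancy of $W$) and of $Dc_0$, a routine induction on $n$ yields that the latching map is a cofibration.

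Granted Reedy cofibrancy, claim (b) is immediate: by the remarks following \eqref{eqn_geometric_realization} the realization of any Reedy cofibrant simplicial object computes its $\Delta^\op$-homotopy colimit, and because each $B_n$ is itself a coproduct of pointwise cofibrant summands, a Fubini argument (or directly, the fact that the $\Set$-indexed coproduct is a homotopy colimit on cofibrant families) identifies $\hocolim_{\Delta^\op}B_\bullet$ with the displayed homotopy colimit. For (a) I compare $|B_\bullet|=\bbI\star_\Delta B_\bullet$ with the ordinary colimit $\colim_{\Delta^\op}B_\bullet\cong\const_\Delta\sfI\star_\Delta B_\bullet$ by means of the augmentation $\bbI\xra{\sim}\const_\Delta\sfI$, which is a pointwise weak equivalence from the Reedy cofibrant object $\bbI$; since $-\star_\Delta-$ is a left Quillen bifunctor for the Reedy structure on $\cosimp\mcV$ and $B_\bullet$ is Reedy cofibrant, the induced map $|B_\bullet|\xra{\sim}W\star_\mcC D$ is a weak equivalence.

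The hard part will be the Reedy cofibrancy verification, specifically bookkeeping the latching map as an iterated pushout-product so that the role of $\id_c\col\sfI\ra\mcC(c,c)$ as the degeneracy-generating map is made explicit. This is precisely where the locally${}_*$ $\mcM$-flat hypothesis is used, and this explains why the ordinary locally $\mcM$-flat condition (sufficient for the projective model structure on $[\mcC^\op,\mcV]$ to exist) is strengthened in the statement.
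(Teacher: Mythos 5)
Your identification of the simplicial object as the two-sided bar construction, the reduction of its strict colimit to the reflexive coequalizer \eqref{eqn_weighted_colimit_to_coequalizer}, and the Reedy cofibrancy verification via iterated pushout-products of the maps $\sfI\ra\mcC(c,c)$ and $0\ra\mcC(c_{i-1},c_i)$ all match the paper (the latter is exactly Proposition~\ref{prop_Reedy_cofibrancy}, proved in the appendix on cubical diagrams). The gap is in your step (a). The map $\bbI\ra\const_\Delta\sfI$ is a pointwise weak equivalence, but $\const_\Delta\sfI$ is \emph{not} Reedy cofibrant (its latching maps are fold maps such as $\sfI\sqcup\sfI\ra\sfI$), so the left Quillen bifunctor property of $-\star_\Delta-$ gives you nothing here: the strict colimit $\const_\Delta\sfI\star_\Delta B_\bullet=\colim_{\Delta^\op}B_\bullet$ is only the coequalizer of $d_0,d_1\col B_1\rightrightarrows B_0$ and is not a homotopy invariant of Reedy cofibrant simplicial objects. (Take $B_\bullet$ to be the nerve of a nontrivial group viewed as a levelwise discrete, hence Reedy cofibrant, simplicial space: its realization is $BG$ but its strict colimit is a point.) Worse, your argument uses only pointwise cofibrancy of $W$, so if it worked it would show $W\star_\mcC D\simeq|\sfB(W\otimes D)|$ for merely pointwise cofibrant $W$, contradicting the Remark after the theorem, which identifies the right-hand side with the \emph{derived} weighted colimit in that generality; the projective cofibrancy hypothesis never does any real work in your proof, which is the tell-tale sign.

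The paper closes this gap by a different route, and this is where the projective cofibrancy of $W$ actually enters. One first shows that the augmented simplicial object $\sfB(W\otimes\mcC_c)\ra Wc$ admits an extra degeneracy $s_{-1}$ (inserting $\id_c\col\sfI\ra\mcC(c,c)$), so by Proposition~\ref{prop_extra_degenerate} its realization maps to $Wc$ by a weak equivalence. This yields a pointwise weak equivalence $|\sfB(W\otimes\mcC_\bullet)|\xra{\sim}W$ whose source is projectively cofibrant, i.e.~a cofibrant replacement of the weight. Since $W$ is itself projectively cofibrant and $-\star_\mcC D$ is left Quillen, applying it preserves this weak equivalence, and cocontinuity together with the Yoneda isomorphism $\mcC_\bullet\star_\mcC D\cong D$ identifies the source with $|\sfB(W\otimes D)|$. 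The extra degeneracy exists only because one tensor factor is representable; there is no such degeneracy on $\sfB(W\otimes D)$ itself, which is why the comparison cannot be run directly on $B_\bullet$ as you propose. Your part (b), identifying the realization of the Reedy cofibrant $B_\bullet$ with the displayed homotopy colimit, is fine.
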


In the second decomposition the indexing category is the category of ordered families of objects of $\mcC$ (or rather of $\delta\mcC$, the discrete category with the same set of objects as $\mcC$). Explicitly the morphisms from $\big([n],(c_0,\ldots,c_n)\big)$ to $\big([m],(c_0',\ldots,c_m')\big)$ are those ordinal maps $f\col[m]\ra[n]$ for which $c_{f(i)}=c_i'$ for all $i=0,\ldots,m$.

The homotopy colimit version (which may be also viewed as the ``geometric realization'' of the $\Delta_+^\op$-subdiagram of the $\Delta^\op$-diagram from the first decomposition) holds true whenever $\mcC$ is only locally $\mcM$-flat. The pointed version of the local $\mcM$-flatness enables one to replace the homotopy colimit by the geometric realization.

\begin{remark}
Theorem~\ref{thm_homotopy_weighted_colimits} in the case that both $W$ and $D$ are merely pointwise cofibrant says the following. While the weighted colimit $W\star_\mcC D$ is not homotopically correct the right hand side is and thus provides a model for the derived weighted colimit which we denote as $-\mathbin{\widehat\star_\mcC}-=-\star_\mcC^\cof-$. Therefore one may write for pointwise cofibrant $W$ and $D$:
\[W\mathbin{\widehat\star_\mcC}D\simeq\hocolim_{\substack{
[n]\in\Delta^\op \\
c_0,\ldots,c_n\in\ob\mcC
}}Wc_n\otimes\mcC(c_0,\ldots,c_n)\otimes Dc_0.\]
As an example we get a formula for the conical homotopy colimit in simplicial model categories, i.e.~for $\mcV=\sSet$. For an ordinary category $\mcI$ and $W=*$ the theorem reads that $\hocolim\nolimits_\mcI D=(\const_{\mcI^\op}*)\mathbin{\widehat\star_\mcI}D$ is weakly equivalent to
\[\Bigg|n\longmapsto\coprod_{i_0,\ldots,i_n}\mcI(i_0,\ldots,i_n)\times Di_0\Bigg|\cong\Bigg|n\longmapsto\coprod_{i_0\ra\cdots\ra i_n}Di_0\Bigg|.\]
This is the Bousfield-Kan formula for homotopy colimits (up to the usual problems with an ambiguous definition of a nerve).
\end{remark}

\begin{remark}
One may organize $\mcC(c_0,\ldots,c_n)$ into a simplicial object $\Delta\mcC\in\simp[\mcC^\op\otimes\mcC,\mcV]$ mapping
\[(c,c')\mapsto\coprod_{c_0,\ldots,c_n}\mcC(c,c_0,\ldots,c_n,c')\]
and the construction from the theorem then becomes $|\Delta\mcC|\star_{\mcC^\op\otimes\mcC}(W\otimes D)$. The weight $|\Delta\mcC|$ is a cofibrant replacement of the hom-bifunctor $\hom_\mcC\col\mcC^\op\otimes\mcC\ra\mcV$ which gives the ordinary coend $\smallint^\mcC F\cong\hom_\mcC\star_{\mcC^\op\otimes\mcC}F$. It is therefore true that $|\Delta\mcC|$ may be used to define a derived coend for general bifunctors (other than tensor products of functors). We will make a further comment on this, the important thing to note is that $\Delta\mcC=\sfB(y_{\mcC^\op\otimes\mcC})$.
\end{remark}

For the rest of this section we will concentrate on proving Theorem~\ref{thm_homotopy_weighted_colimits} and will give a few applications in the next section. We start with the non-homotopy version of our theorem, i.e.~the isomorphism $W\star_\mcC D\cong\smallint^\mcC(W\otimes D)$ and its refinement \eqref{eqn_weighted_colimit_to_coequalizer},
\[\xymatrix{
W\star_\mcC D\cong\mathop{\mathrm{coeq}}\Big(\displaystyle\coprod_{c_0,c_1}Wc_1\otimes\mcC(c_0,c_1)\otimes Dc_0 \ar@<-2pt>[r] \ar@<2pt>[r] & \displaystyle\coprod\limits_c Wc\otimes Dc\Big).
}\]
Note that this coequalizer is reflexive. In homotopy situations one should replace reflexive coequalizers by geometric realizations of simplicial objects.

\begin{observation}
Let $F\col\mcC^\op\otimes\mcC\ra\mcM$ be a $\mcV$-bifunctor with $\mcC$ a small $\mcV$-category and $\mcM$ a tensored $\mcV$-category. The coend $\smallint\nolimits^\mcC F$ of $F$ is isomorphic to the colimit of the diagram $\sfB F\col\Delta^\op\ra\mcM_0$ given by
\[n\longmapsto\coprod_{c_0,\ldots,c_n\in\mcC}\mcC(c_0,\ldots,c_n)\otimes F(c_n,c_0).\]
Accordingly there is an augmentation $\varepsilon\col\sfB F\ra\smallint^\mcC F$.
\end{observation}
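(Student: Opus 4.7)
The plan is to reduce the assertion to the classical coequalizer presentation of the coend recalled in equation~\eqref{eqn_weighted_colimit_to_coequalizer}. First I would invoke the following general fact: for any simplicial object $X_\bullet$ in a cocomplete category there is a canonical isomorphism
\[\colim_{\Delta^\op} X_\bullet \;\cong\; \mathop{\mathrm{coeq}}(d_0, d_1 \col X_1 \rightrightarrows X_0).\]
Indeed, every element of the colimit is represented by a $0$-simplex (via the unique $[0]\to[n]$ in $\Delta$), every identification between such representatives is already witnessed by a $1$-simplex, and the coherences coming from $X_n$ with $n\ge 2$ are formal consequences of the $n=1$ relations.

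Given this, it suffices to identify the zero-th and first levels of $\sfB F$ together with the two face maps between them. With the convention that an empty tensor product equals the unit $\sfI$, I read off $(\sfB F)_0 = \coprod_{c\in\ob\mcC} F(c,c)$ and $(\sfB F)_1 = \coprod_{c_0, c_1\in\ob\mcC} \mcC(c_0, c_1) \otimes F(c_1, c_0)$. By the very definition of the bar construction, $d_0$ is induced by the covariant action of $\mcC$ on the second variable of $F$, namely $\mcC(c_0, c_1) \otimes F(c_1, c_0) \to F(c_1, c_1)$, while $d_1$ is induced by the contravariant action on the first variable, giving $\mcC(c_0, c_1) \otimes F(c_1, c_0) \to F(c_0, c_0)$. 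These are precisely the two parallel arrows whose coequalizer is the standard presentation of $\int^\mcC F$, so passing to the coequalizer identifies $\colim \sfB F$ with $\int^\mcC F$ and the augmentation $\varepsilon$ is just the canonical cocone map.

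The only genuine concern---and the main point at which I expect to have to pause---is bookkeeping with the orientation convention $\mcC(c_0,\ldots,c_n) = \mcC(c_{n-1}, c_n) \otimes \cdots \otimes \mcC(c_0, c_1)$ adopted in the paper, which determines whether $d_0$ or $d_1$ ends up being the covariant resp.\ contravariant action. Once this sign-type issue is pinned down the argument is otherwise purely routine.
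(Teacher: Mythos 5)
Your argument is correct and follows essentially the same route as the paper: the paper's proof is the one-line observation that the inclusion of the parallel pair $d_0,d_1\col[1]\rightrightarrows[0]$ into $\Delta^\op$ is right cofinal, so $\colim_{\Delta^\op}\sfB F$ is the coequalizer of $d_0,d_1$, which is then the classical coequalizer presentation \eqref{eqn_weighted_colimit_to_coequalizer} of the coend. Your identification of the two face maps with the covariant and contravariant actions matches this exactly; the only difference is that you justify the reduction to the coequalizer by an informal element-chase where the paper cites cofinality.
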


\begin{proof}
This follows from the inclusion of the parallel pair $d_0,d_1\col[1]\rightrightarrows[0]$ into $\Delta^\op$ being right cofinal, i.e.~the colimit of $\sfB F$ is the coequalizer of the two arrows induced by $d_0,d_1$.
\end{proof}

If $\mcM$ is now a model $\mcV$-category we may consider various homotopy versions of the colimit of $\sfB F$. In the classical situation $\mcV=\sSet$ the geometric realization will be called the homotopy coend of $F$ (it is called a simplicially coherent coend in \cite{CordierPorter} and in the non-enriched context of \cite{Weibel} a homotopy coend). As explained in \eqref{eqn_geometric_realization} we may define geometric realization over arbitrary $\mcV$ using the cosimplicial resolution $\bbI$ of $\sfI$, i.e.
\[|\sfB F|=\bbI\star_\Delta\sfB F,\]
the weighted colimit with the weight $\bbI$. There is a transformation $\bbI\ra\const_\Delta\sfI$ of weights from $\bbI$ to the constant cosimplicial object on $\sfI$. On weighted colimits this induces a canonical map
\[|\sfB F|\ra\colim\nolimits_{\Delta^\op}\sfB F\cong\smallint\nolimits^\mcC F.\]

In the same manner we may define the homotopy colimit of $\sfB F$ using a cofibrant replacement of $\const_\Delta\sfI$ in the projective model structure. We will see however that in our situation $\sfB F$ is Reedy cofibrant and thus the homotopy colimit and the geometric realization are weakly equivalent (see Theorem~19.8.7 of \cite{Hirschhorn}).

\begin{proposition} \label{prop_Reedy_cofibrancy}
Let $\mcC$ be a small locally${}_*$ $\mcM$-flat $\mcV$-category. If $F$ is pointwise cofibrant then $\sfB F$ is Reedy cofibrant in $\simp\mcM_0$ and hence $|\sfB F|$ is cofibrant in $\mcM$.
\end{proposition}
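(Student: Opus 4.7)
The plan is to verify Reedy cofibrancy by dissecting the $n$-th latching map $L_n\sfB F\ra\sfB F_n$ summand by summand and recognising each piece as an iterated pushout-product (Leibniz tensor) of maps to which the locally${}_*$ $\mcM$-flatness hypothesis applies directly.

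First I would unwind the simplicial structure of $\sfB F$. The degeneracy $s_i\col\sfB F_{n-1}\ra\sfB F_n$ sends the summand indexed by $(c_0,\ldots,c_{n-1})$ into the summand indexed by $(c_0,\ldots,c_i,c_i,c_{i+1},\ldots,c_{n-1})$ by inserting $\id_{c_i}\col\sfI\ra\mcC(c_i,c_i)$ in the appropriate tensor factor. Consequently $L_n\sfB F$ decomposes as a coproduct over $(c_0,\ldots,c_n)\in\ob\mcC^{n+1}$; writing $S=\{j\in\{1,\ldots,n\}\col c_{j-1}=c_j\}$, the summand at a fixed tuple is the colimit of the cube whose vertices are indexed by proper subsets $T\subsetneq S$ and whose value at $T$ is $\bigotimes_{j\in T}\mcC(c_{j-1},c_j)\otimes\bigotimes_{j\in S\setminus T}\sfI\otimes\bigotimes_{j\notin S}\mcC(c_{j-1},c_j)\otimes F(c_n,c_0)$, with transitions given by the identities. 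The latching map at this summand is therefore the Leibniz tensor of $\{\id_{c_j}\col\sfI\ra\mcC(c_j,c_j)\}_{j\in S}$ formed in $\mcV$, subsequently tensored with the $\mcV$-object $\bigotimes_{j\notin S}\mcC(c_{j-1},c_j)$ and with the $\mcM$-object $F(c_n,c_0)$.

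Second I would invoke the hypotheses. Since $\mcC$ is locally${}_*$ $\mcM$-flat, each map $\id_{c_j}\col\sfI\ra\mcC(c_j,c_j)$ is $\mcM$-flat and each object $\mcC(c_{j-1},c_j)$ is $\mcM$-flat. By the standard formal fact that $\mcM$-flat maps are closed under the Leibniz tensor (a direct consequence of the pushout-product axiom for the $\mcV$-action on $\mcM$), the Leibniz tensor above is an $\mcM$-flat map in $\mcV$. Tensoring it with the $\mcM$-flat object $\bigotimes_{j\notin S}\mcC(c_{j-1},c_j)$ preserves $\mcM$-flatness, and finally taking its pushout-product with the cofibration $\emptyset\ra F(c_n,c_0)$ in $\mcM$ (a cofibration since $F$ is pointwise cofibrant) yields a cofibration in $\mcM$. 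Coproducts of cofibrations being cofibrations, the total latching map is a cofibration, and $\sfB F$ is Reedy cofibrant in $\simp\mcM_0$. For the second assertion I would invoke that $\star_\Delta\col[\Delta,\mcV_0]\times[\Delta^\op,\mcM_0]\ra\mcM_0$ is left Quillen with respect to the Reedy model structures, a classical consequence of the model $\mcV$-category axioms; since $\bbI$ is Reedy cofibrant by its definition as a cosimplicial resolution and $\sfB F$ is Reedy cofibrant by the above, $|\sfB F|=\bbI\star_\Delta\sfB F$ is cofibrant in $\mcM$.

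The main obstacle will be the careful combinatorial identification in the first step: one must track how the iterated compositions $s_{i_k}\cdots s_{i_1}$ generate the image of degeneracies at level $n$, and recognise, tuple by tuple, that the resulting cube-shaped colimit is precisely the domain of the claimed Leibniz tensor. Everything after this identification — closure of $\mcM$-flat maps under Leibniz tensor and tensor with $\mcM$-flat objects, and the Reedy Quillen bifunctor statement for geometric realization — is formal and standard.
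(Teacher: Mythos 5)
Your proposal is correct and is essentially the paper's own argument: the paper likewise identifies the $n$-th latching map, summand by summand over tuples $(c_0,\ldots,c_n)$, with the pushout corner map of an iterated pushout-product of the $\mcM$-flat maps $\sfI\ra\mcC(c_i,c_i)$ (at repeated indices), the $\mcM$-flat objects $\mcC(c_{i-1},c_i)$ viewed as $0\ra\mcC(c_{i-1},c_i)$, and the cofibration $0\ra F(c_n,c_0)$, and concludes by the closure of $\mcM$-flat maps under the Leibniz construction. The only cosmetic difference is that the paper packages the combinatorics through its appendix on cubical diagrams and the identity $\pcm(X\odot Y)\cong\pcm((\pcm X)\odot(\pcm Y))$, whereas you phrase the same decomposition directly as a Leibniz tensor indexed by the set of repeated indices.
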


\begin{proof}
Postponed to a later section.
\end{proof}

In fact it holds more generally that if $F\Ra F'$ is a pointwise (trivial) cofibration then $\sfB F\Ra\sfB F'$ is a Reedy (trivial) cofibration.

Observe that the association $F\mapsto|\sfB F|$ is a functor $[\mcC^\op\otimes\mcC,\mcM]\ra\mcM$. According to the previous proposition it maps pointwise cofibrant diagrams to cofibrant objects. Moreover it commutes with all cocontinuous $\mcV$-functors. This is clear by a direct inspection since it is defined by weighted colimits.

Now we prove the most important step which for $W=\mcC^{c'}$ gives $|\Delta\mcC|\xlra\sim\hom_\mcC$ as promised in one of the remarks following our main theorem.

\begin{lemma}
Let $\mcC$ be a small locally${}_*$ $\mcM$-flat $\mcV$-category. For any pointwise cofibrant weight $W\col\mcC^\op\ra\mcV$ the natural map
\[|\sfB(W\otimes\mcC_c)|\lra\smallint\nolimits^\mcC(W\otimes\mcC_c)\cong Wc\]
is a weak equivalence.
\end{lemma}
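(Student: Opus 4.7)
The plan is to recognize $\sfB(W\otimes\mcC_c)$ as a two-sided bar construction $B(W,\mcC,\mcC_c)$ in which the right module $\mcC_c=\mcC(c,-)$ is representable, and to exhibit the augmentation to $Wc$ as a simplicial homotopy equivalence via an extra degeneracy built from the unit map $\id_c\col\sfI\to\mcC(c,c)$. The non-derived content is just the co-Yoneda isomorphism $\smallint\nolimits^{c'}Wc'\otimes\mcC(c,c')\cong Wc$, so the work is to upgrade this to a simplicial-level statement and then transfer it to the geometric realization using Reedy cofibrancy.

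Concretely I would proceed as follows. First, write out the $n$-th level of $\sfB(W\otimes\mcC_c)$ as $\coprod_{c_0,\ldots,c_n}\mcC(c_{n-1},c_n)\otimes\cdots\otimes\mcC(c_0,c_1)\otimes\mcC(c,c_0)\otimes Wc_n$, and identify the augmentation $\varepsilon\col\sfB(W\otimes\mcC_c)_0=\coprod_{c_0}\mcC(c,c_0)\otimes Wc_0\to Wc$ with the co-Yoneda map induced by the contravariant action of $\mcC$ on $W$. Next, define an extra degeneracy $s\col\sfB(W\otimes\mcC_c)_n\to\sfB(W\otimes\mcC_c)_{n+1}$ which on the summand indexed by $(c_0,\ldots,c_n)$ maps to the summand indexed by $(c,c_0,\ldots,c_n)$ by tensoring with $\id_c\col\sfI\to\mcC(c,c)$ in the position of the freshly-inserted factor $\mcC(c,c_{-1})=\mcC(c,c)$. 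The verification that $(\varepsilon,s)$ satisfies the standard identities of a split augmented simplicial object is a direct calculation from associativity of composition and unitality in $\mcC$, all done in the enriched sense.

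Given these extra degeneracies, $\varepsilon$ is a simplicial homotopy equivalence (in the $\mcV$-enriched sense), with simplicial homotopy inverse given by the obvious map from the constant simplicial object $\const Wc$ into $\sfB(W\otimes\mcC_c)_0$ followed by iterated degeneracies. By Proposition~\ref{prop_Reedy_cofibrancy} the simplicial object $\sfB(W\otimes\mcC_c)$ is Reedy cofibrant, and $\const Wc$ is Reedy cofibrant because $Wc$ is cofibrant (as $W$ is pointwise cofibrant). Since the geometric realization $|-|=\bbI\star_\Delta(-)$ is the left derived functor of $\colim_{\Delta^\op}$ on Reedy cofibrant objects and carries simplicial homotopy equivalences between Reedy cofibrant objects to weak equivalences, we obtain $|\sfB(W\otimes\mcC_c)|\xrightarrow{\sim}|\const Wc|\simeq Wc$, as required.

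The main obstacle is the third step: ensuring that the extra-degeneracy construction genuinely works enrichedly, and in particular that the identity $\id_c\col\sfI\to\mcC(c,c)$ really suffices to produce a map at the level of simplicial $\mcM$-objects (rather than only on representables). This is precisely where the \emph{pointed} version of local flatness enters — one needs $\id_c$ to behave well with respect to tensoring in $\mcM$ so that the inserted factor $\mcC(c,c)$ can be freely split off. A secondary point of care is that the passage from a simplicial homotopy equivalence to a weak equivalence of realizations must be carried out in $\mcV$ rather than in $\sSet$; this is handled by the fact that $|\cdot|$ is a left Quillen functor on the Reedy model structure and simplicial homotopies are preserved by any left adjoint, which reduces the claim to Reedy cofibrancy.
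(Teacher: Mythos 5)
Your proof is correct and follows essentially the same route as the paper: the augmentation to $Wc$ together with the extra degeneracy $s_{-1}$ obtained by inserting $\id_c\col\sfI\to\mcC(c,c)$, combined with the Reedy cofibrancy supplied by Proposition~\ref{prop_Reedy_cofibrancy}. The only (immaterial) divergences are at the last step, where the paper invokes Proposition~\ref{prop_extra_degenerate} (proved by a homotopy cofinality argument for $\Delta\hra\Delta_\bot\hookleftarrow[-1]_*$) instead of your classical simplicial-contraction argument, and in your closing discussion: the pointed flatness hypothesis is really what guarantees the Reedy cofibrancy of $\sfB(W\otimes\mcC_c)$ (it controls the latching maps), not the well-definedness of $s_{-1}$, which exists for any $\mcV$-category.
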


\begin{proof}
We recall that $\sfB(W\otimes\mcC_c)$ admits an augmentation to $\smallint^\mcC(W\otimes\mcC_c)\cong Wc$ whose geometric realization is the first map in the composition
\[|\sfB(W\otimes\mcC_c)|\lra|\const_{\Delta^\op}Wc|\cong\bbI^0\otimes Wc\xlra{\sim}Wc.\]
Since the second map is a weak equivalence by the monoidal model category axioms it remains to study the geometric realization of the augmentation. This is taken care of by Proposition~\ref{prop_extra_degenerate} since the augmented simplicial object $\sfB(W\otimes\mcC_c)$ mapping
\[n\longmapsto\coprod_{c_0,\ldots,c_n\in\ob\mcC}Wc_n\otimes\mcC(c,c_0,\ldots,c_n)\]
is Reedy cofibrant and admits an extra degeneracy $s_{-1}$ given by ``repeating'' $c$. On the obvious summands it is given by
\[Wc_n\otimes\mcC(c,c_0,\ldots,c_n)\ra Wc_n\otimes\mcC(c,c,c_0,\ldots,c_n)\]
(a tensor of the domain with the ``identity'' map $\id_c\col\sfI\ra\mcC(c,c)$).
\end{proof}

By the previous lemma there is a pointwise weak equivalence
\[|\sfB(W\otimes\mcC_\bullet)|\xlra{\sim}W\]
where $\mcC_\bullet$ denotes the Yoneda embedding $\mcC\ra[\mcC^\op,\mcV]$. Since the values of $W\otimes\mcC_\bullet$ are $Wc\otimes\mcC_\bullet c'=Wc\otimes\mcC^{c'}$ they are cofibrant and this map is a cofibrant replacement for a pointwise cofibrant $W$.

\begin{proof}[Proof of Theorem~\ref{thm_homotopy_weighted_colimits}]
Applying $-\star_\mcC D$ to the above cofibrant replacement yields the following weak equivalence since $W$ is assumed to be cofibrant already.
\[|\sfB(W\otimes D)|\cong|\sfB((W\otimes\mcC_\bullet)\star_\mcC D)|\cong|\sfB(W\otimes\mcC_\bullet)|\star_\mcC D\xlra{\sim}W\star_\mcC D\]
The first isomorphism follows from the ``Yoneda lemma'' $\mcC_\bullet\star_\mcC D\cong D$ (see \cite{Kelly}, p.~53) and the second from $-\star_\mcC D$ being cocontinuous.
\end{proof}

The cocontinuity of $-\star_\mcC D$ also implies the following principle. Any homotopy weighted colimit decomposition of $W$ yields a decomposition of $W\star_\mcC D$. In the proceeding we will therefore concentrate on decompositions of weights.

\section{Examples for classical enrichments}

\subsection{Homotopy density}

We would like to compare the expressive power of homotopy weighted colimits with that of conical homotopy colimits in specific examples $\mcV=\sSet,\sAb,\Ch_+,\Ch,\Cat$. We will make this comparison through the notion of homotopy density. A pointwise cofibrant $\mcV$-functor $J\col\mcC\ra\mcM$ is called \emph{homotopy dense} if for every fibrant $M\in\mcM$ the canonical map
\[\mcM(J-,M)^\cof\star_\mcC J\lra M\]
is a weak equivalence where $(-)^\cof$ denotes the projective cofibrant replacement. We call the homotopy weighted colimit on the left the \emph{canonical homotopy weighted colimit associated with $J$ and $M$}. This map is easily seen to be the derived counit $L^\cof RM\ra M$ of the Quillen adjunction
\[\xymatrix{
L\col[\mcC^\op,\mcV] \ar@<2pt>[r] & \mcM\loc R \ar@<2pt>[l]
}\]
with $L$ the unique cocontinuous extension of $J$, given by $LW=W\star_\mcC J$. The right adjoint is $RM=\mcM(J-,M)$. We say that a Quillen adjunction is a \emph{homotopy reflection} if the derived counit is a weak equivalence on fibrant objects. Therefore $J$ is homotopy dense if and only if $L\dashv R$ is a homotopy reflection. A prime example is that of a Yoneda embedding. The canonical weight associated with $y_\mcC\col\mcC\ra[\mcC^\op,\mcV]$ is $[\mcC^\op,\mcV](y_\mcC-,X)\cong X$ and consequently $y_\mcC$ is homotopy dense since
\[X^\cof\star_\mcC y_\mcC\cong X^\cof\xlra{\sim}X.\]

A slightly different situation occurs in the non-enriched context. We will not speak however about the presheaf category $[\mcI^\op,\Set]$ but rahter about the simplicial presheaf category $[\mcI_\mcV^\op,\sSet]$ or more generally $[\mcC^\op,\sSet]$ for a small simplicial category $\mcC$. The reason is the ``simplicial homotopy enrichment'' of any model category given by \eqref{eqn_simplicial_enrichment}. In this case it is not true in general that every fibrant presheaf $X$ is weakly equivalent to the ``canonical conical homotopy colimit'' with respect to $\mcC$ or more precisely $\mcC_0$. Let us define this concept in detail. We assume that $\mcM$ is a simplicial model category for simplicity but it is not really necessary. We say that a pointwise cofibrant functor $J\col\mcI\ra\mcM_0$ is \emph{conical homotopy dense} if for each fibrant $M\in\mcM$ the canonical map
\[\mcM_0(J-,M)^\cof\star_\mcI J\lra M\]
is a weak equivalence. We will show in general that while $\mcC_0\subseteq[\mcC^\op,\sSet]_0$ is not conical homotopy dense its simple modification is. Thinking of $\mcC_0$ as the category of representables the modification $(\Delta\ltimes\mcC)_0$ consists of the tensor products $\Delta^k\otimes\mcC^c$ for varying $k$ and $c$.

The non-homotopy density is typically expressed through conical colimits indexed by the category of elements. We will now do the same translation in the homotopy context. Namely we will show that the conical homotopy colimit of the composition
\[\mcJ=J\da M\xlra{P_M}\mcI\xlra{J}\mcM_0\]
is weakly equivalent to $\mcM_0(J-,M)^\cof\star_\mcI J$. Computing with the simplicial enrichment $\hocolim\nolimits_\mcJ JP_M$ is given by
\begin{equation} \label{eqn_hocolim_of_representables}
N(-\da\mcJ)\star_\mcJ JP_M\cong(P_M)_!N(-\da\mcJ)\star_\mcI J\cong N(-\da P_M)\star_\mcI J.
\end{equation}
Since $P_M$ is a discrete Grothendieck fibration we have
\[N(-\da P_M)\simeq N(P_M^{-1}(-))=\mcM_0(J-,M).\]
Finally $N(-\da P_M)$ is cofibrant and this finishes the proof of the weak equivalence
\[\mcM_0(J-,M)^\cof\star_\mcI J\simeq\hocolim\nolimits_\mcJ JP_M.\]

We give now an explicit formula using the following description of the weight $N(-\da P_M)$,
\[N(-\da P_M)\cong\int^iN(i\da P_m)\otimes\mcI^i\cong\int^i\Bigg|n\mapsto\coprod_{\substack{
i\ra i_0\ra\cdots\ra i_n \\
Ji_n\ra M}}
\mcI^i\Bigg|\cong\Bigg|n\mapsto\coprod_{\substack{
i_0\ra\cdots\ra i_n \\
Ji_n\ra M}}
\mcI^{i_0}\Bigg|.\]
Applying the weighted colimit functor $-\star_\mcI J$ we obtain
\[\hocolim\nolimits_{J\da M}JP_M\cong\Bigg|n\longmapsto\coprod_{\substack{
i\ra i_0\ra\cdots\ra i_n \\
Ji_n\ra M}}
Ji_0\Bigg|=\Bigg|n\longmapsto\coprod_{i_0\ra\cdots\ra i_n\ra M}i_0\Bigg|\]
where in the last expression we omitted $J$ under the assumption that it is a full embedding.

\subsection{Simplicial presheaves on an ordinary category}
For $\mcV=\sSet$ and an ordinary category $\mcI$, Theorem~\ref{thm_homotopy_weighted_colimits} with $W\in[\mcI^\op,\sSet]$ (pointwise) cofibrant says
\[W\cong W\star_\mcI\mcI_\bullet\simeq\Biggl|n\longmapsto\coprod_{i_0,\ldots,i_n}Wi_n\times\mcI(i_0,\ldots,i_n)\times\mcI^{i_0}\Biggr|.\]
We may rewrite $Wi_n\times\mcI(i_0,\ldots,i_n)$ as the simplicial set whose set of $m$-simplices is the set of all chains $\mcI^{i_0}\ra\cdots\mcI^{i_n}\ra W_m$ in $[\mcI^\op,\Set]$. Since the double geometric realization is isomorphic to the geometric realization of the diagonal we obtain a formula
\begin{equation} \label{eqn_first_replacement}
W\simeq\Bigg|n\longmapsto\Bigg|m\longmapsto\coprod_{\mcI^{i_0}\ra\cdots\mcI^{i_n}\ra W_m}\mcI^{i_0}\Bigg|\Bigg|\cong\Bigg|n\longmapsto\coprod_{\mcI^{i_0}\ra\cdots\mcI^{i_n}\ra W_n}\mcI^{i_0}\Bigg|.
\end{equation}
which presents $W$ as a homotopy colimit of representables. In particular if each $Wi$ is discrete the canonical map
\[\hocolim\nolimits_{J\da W}JP_W\ra W\]
is a weak equivalence but not in general. In other words the Yoneda embedding $y_\mcI\col\mcI\ra[\mcI^\op,\sSet]$ is not conical homotopy dense even though every simplicial presheaf is a conical homotopy colimit of representables (presheaves in the image of $y_\mcI$). This formula was obtained by Dugger in \cite{Dugger}. We will also derive his second cofibrant replacement formula in a greater generality -- for simplicial indexing categories.

\subsection{Simplicial presheaves on a simplicial category}
Still restricting to the case $\mcV=\sSet$ one has for $K\in\sSet$ the following weak equivalences which are natural in $K$:
\[K\cong\colim_{\Delta^k\ra K}\Delta^k\xlla{\sim}\hocolim_{\Delta^k\ra K}\Delta^k\xlra{\sim}\hocolim\nolimits_{\Delta K}*\]
where the indexing category $\Delta K$ is the category of simplices of $K$ (the category of elements), see \cite{Hirschhorn}, Corollary 19.9.2 with $p=\id_K$. Therefore if $M\in\mcM$ is cofibrant in a simplicial model category $\mcM$ we get weak equivalences
\[K\otimes M\xlla{\sim}\hocolim_{\Delta^k\ra K}\Delta^k\otimes M\xlra{\sim}\hocolim\nolimits_{\Delta K}M.\]
Consequently (homotopy) tensors are ``generated'' by homotopy colimits. More precisely we may replace each tensor up to weak equivalence by a conical homotopy colimit (of a constant diagram with value $M$). We note however that this zig-zag of weak equivalences is natural in $K$ and $M$ but \emph{not} in $K\otimes M$. As a result it is not true that every simplicial presheaf is weakly equivalent to a conical homotopy colimit of representables as the following example shows. One may use the first decomposition of $K\otimes M$ (which is still reasonably natural) to show that every simplicial presheaf is a conical homotopy colimit of presheaves of the form $\Delta^k\otimes\mcC^c$. We will prove this result by a different method which will yield a stronger result namely the conical homotopy density of the presheaves of the said form.

\begin{sexample}[Representables are not conical homotopy dense]
Consider the category $\mcC$ with two objects $0$ and $1$ whose only non-identity morphisms form $\mcC(0,1)=K$ with a single vertex and not weakly contractible. Then the representables form a category $\mcC_0=[1]$ and we will now show that the terminal funcor $\mcC^\op\ra\sSet$ may not be weakly equivalent to a homotopy colimit of representables. Suppose that $W$ is the homotopy colimit of $\mcI\xra{P}\mcC_0\ra[\mcC^\op,\sSet]_0$. Then according to \eqref{eqn_hocolim_of_representables},
\begin{align*}
W1 & \cong N(1\da P), \\
W0 & \cong\Big(K\times N(1\da P)\Big)\sqcup_{N(1\da P)}N(0\da P).
\end{align*}
Thus if $W1\simeq *$ then $W0\simeq K\vee N(0\da P)$ and may not be weakly contractible.
\end{sexample}

We remark here that as a result of \eqref{eqn_second_replacement} every weight $W\in[\mcC^\op,\sSet]$ over a general simplicial category $\mcC$ is weakly equivalent to a conical homotopy colimit of a homotopy coherent diagram of representables and therefore also to a conical homotopy colimit of a simplicial diagram of representables.

We would like to prove now the above claim about conical homotopy density of the full subcategory $\Delta\ltimes\mcC\subseteq[\mcC^\op,\sSet]$ consisting of the simplicial presheaves of the form $\Delta^k\otimes\mcC^c$ which we will for simplicity denote by $(k,c)$. In the proceeding we will need a concrete description of the morphism spaces
\[\Delta\ltimes\mcC\Big((k',c'),(k,c)\Big)=\sSet(\Delta^{k'},\Delta^k\times\mcC(c',c)).\]
As usual we denote by $(\Delta\ltimes\mcC)_0$ the underlying ordinary category of $\Delta\ltimes\mcC$.

\begin{theorem}
The full subcategory $(\Delta\ltimes\mcC)_0\subseteq[\mcC^\op,\sSet]_0$ is conical homotopy dense.
\end{theorem}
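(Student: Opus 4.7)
My strategy is to construct an explicit conical homotopy colimit decomposition of $X$ in terms of objects of $\Delta\ltimes\mcC$ and then identify it, via a cofinality argument, with the canonical slice-indexed homotopy colimit.

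First, the enriched Yoneda $y_\mcC\col\mcC\to[\mcC^\op,\sSet]$ is $\sSet$-enriched homotopy dense: its canonical weight at $X$ is tautologically $X$ itself and the derived counit of the associated Quillen adjunction is a weak equivalence for fibrant $X$. Writing $X\simeq X^\cof\star_\mcC y_\mcC$ and applying Theorem~\ref{thm_homotopy_weighted_colimits} to this weighted colimit yields
\[X\simeq\Bigg|n\longmapsto\coprod_{c_0,\ldots,c_n\in\ob\mcC}Xc_n\times\mcC(c_0,\ldots,c_n)\times\mcC^{c_0}\Bigg|.\]

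Next, each simplicial set $Xc_n\times\mcC(c_0,\ldots,c_n)$ is, up to weak equivalence, a conical homotopy colimit over its own category of simplices of standard simplices $\Delta^m$. Tensoring with $\mcC^{c_0}$ and applying the diagonal trick (as in the derivation of \eqref{eqn_first_replacement}) reorganizes the iterated realization into a single simplicial diagram whose $m$-th level is a coproduct of objects of the form $\Delta^m\otimes\mcC^{c_0}\in\Delta\ltimes\mcC$. Collecting this data gives an explicit functor $P\col\mcK\to(\Delta\ltimes\mcC)_0$ from some (small) indexing category and a weak equivalence
\[\hocolim_\mcK JP\xlra{\sim}X.\]

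Finally I compare this decomposition with the canonical one. The summands occurring above are organized over $X$: each $\Delta^m\otimes\mcC^{c_0}$ appearing at some level comes equipped with a canonical map to $X$ coming from the chosen simplices of $Xc_n$ and $\mcC(c_0,\ldots,c_n)$. These structure maps define a natural functor $\Phi\col\mcK\to J\da X$ over the inclusion $J$. To conclude, I verify that $\Phi$ is homotopy left cofinal by showing that for each object $(k,c,\sigma)\in J\da X$ (i.e.\ a $k$-simplex $\sigma\in Xc_k$) the associated comma category $\Phi\da(k,c,\sigma)$ is weakly contractible. Cofinality then identifies the two colimits and yields the required weak equivalence
\[\hocolim_{J\da X}JP_X\simeq\hocolim_\mcK JP\simeq X.\]

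The main obstacle I expect is this final cofinality step: pinning down the correct combinatorial description of $\mcK$ and verifying contractibility of the comma categories $\Phi\da(k,c,\sigma)$. Heuristically this reduces to the classical contractibility of the category of simplices of a simplicial set together with the fullness of the inclusion $J$, but matching the iterated simplicial indices against the flat slice $J\da X$ requires careful bookkeeping. The other standard subtlety, checking Reedy/projective cofibrancy throughout so that the various realizations compute honest homotopy colimits, is handled by Proposition~\ref{prop_Reedy_cofibrancy} applied at each level.
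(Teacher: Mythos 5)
Your steps (1)--(2) only establish that $X$ is weakly equivalent to \emph{some} conical homotopy colimit of objects of $(\Delta\ltimes\mcC)_0$, and the paper itself points out (in the example showing representables are not conical homotopy dense, and in the remark preceding this theorem) that this is strictly weaker than conical homotopy density: the latter requires the \emph{canonical} map $\hocolim_{J_0\da X}J_0P_X\to X$ to be a weak equivalence. So the entire content of the theorem sits in your step (3), which you do not carry out but only flag as "the main obstacle." Concretely, three things go wrong or are missing there. First, the variance is off: to compare homotopy \emph{colimits} along $\Phi\col\mcK\to J_0\da X$ you need $\Phi$ to be homotopy \emph{right} cofinal, i.e.\ the under-categories $(k,c,\sigma)\da\Phi$ must be contractible, not the over-categories $\Phi\da(k,c,\sigma)$. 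Second, the reorganization of the bar resolution into a single $\mcK$-diagram of objects $\Delta^m\otimes\mcC^{c_0}$ is delicate: the face map $d_0$ of the simplicial object $n\mapsto\coprod Xc_n\times\mcC(c_0,\ldots,c_n)\times\mcC^{c_0}$ composes $\mcC(c_0,c_1)$ into the representable factor, so it is not a tensor product of a map of simplicial sets with a map of presheaves, and the replacement $A\otimes\mcC^{c_0}\simeq\hocolim_{\Delta A}\mcC^{c_0}$ (which, as the paper stresses, is natural in $A$ and in $\mcC^{c_0}$ separately but \emph{not} in the tensor) cannot be applied levelwise without extra care. Third, even with $\mcK$ and $\Phi$ correctly set up, the contractibility of the relevant comma categories is not the classical contractibility of a category of simplices: the objects of $J_0\da X$ involve the simplicial hom-spaces $\mcC(c',c)$ in an essential way, and no argument is offered. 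As it stands the proposal proves only the weaker "every $X$ is some conical homotopy colimit of objects of $\Delta\ltimes\mcC$."

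For comparison, the paper's proof avoids cofinality entirely by working at the level of weights: it forms the canonical weights $W$ on $\mcC$, $V\col(k,c)\mapsto(Wc)^{\Delta^k}$ on $\Delta\ltimes\mcC$ and $V_0\col(k,c)\mapsto W_kc$ on $(\Delta\ltimes\mcC)_0$, and shows the comparison maps $G_!W\to V\leftarrow H_!(V_0)^{\cof}$ are weak equivalences. The two key devices are (i) every object $(k,c)$ is simplicially homotopy equivalent to $(0,c)$ and simplicial functors preserve simplicial homotopies, so equivalences need only be checked on the image of $G$, where $G_!W\to V$ is an isomorphism by fullness; and (ii) a direct coend computation of $H_!(V_0)^{\cof}(0,c)\cong|n\mapsto(V_0)^{\cof}(n,c)|\simeq|n\mapsto W_nc|\cong Wc$. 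If you want to salvage your approach, you would need to either carry out the cofinality argument in full or switch to this weight-level comparison.
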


\begin{proof}
We have a diagram
\[\xymatrix@=10pt{
\mcC \ar[dr]_-G \ar@/^10pt/[drrr]^{y_\mcC} \\
& \Delta\ltimes\mcC \ar[rr]^-J & & [\mcC^\op,\sSet] \\
(\Delta\ltimes\mcC)_0 \ar[ur]^-H \ar@/_10pt/[urrr]_{J_0}
}\]
Let $W\in[\mcC^\op,\sSet]$ be a cofibrant weight. The canonical weights are then respectively $W$ on $\mcC$, $V$ on $\Delta\ltimes\mcC$ and the canonical \emph{conical} weight $V_0$ on $(\Delta\ltimes\mcC)_0$ where
\[V\col(k,c)\longmapsto(Wc)^{\Delta^k},\qquad V_0\col(k,c)\longmapsto W_kc.\]
They are related by morphisms $W\xra\cong G^*V$ and $(V_0)^\cof\ra H^*V$ (the inclusion of the 0-skeleton). By adjunction we obtain morphisms
\[G_!W\lra V\lla H_!(V_0)^\cof\]
of weights on $\mcC$. Our aim will be to show that they are weak equivalences as
\[W\cong W\star_\mcC y_\mcC\cong G_!W\star_{\Delta\ltimes\mcC}J\]
will then be weakly equivalent to
\[\hocolim\nolimits_{J_0\da W}J_0P_W\cong(V_0)^\cof\star_{(\Delta\ltimes\mcC)_0}J_0\simeq H_!(V_0)^\cof\star_{\Delta\ltimes\mcC}J.\]

Since in $\Delta\ltimes\mcC$ the object $(k,c)$ is simplicially homotopy equivalent to $(0,c)$ and since any simplicial functor preserves simplicial homotopies it is only necessary to test weak equivalences on objects $(0,c)$, i.e.~on the image of $G$. But $G_!W\ra V$ is even an isomorphism on the image of $G$ since $G$ is full. For the second weight we compute using the standard formula for the left Kan extension (see (4.18) in \cite{Kelly})
\[H_!(V_0)^\cof(0,c)=\int\nolimits^{(k_0,c_0)}\Delta\ltimes\mcC\big((0,c),(k_0,c_0)\big)\otimes(V_0)^\cof(k_0,c_0).\]
Since $\Delta^n\otimes(0,c)=(n,c)$ in $\Delta\ltimes\mcC$ we may write
\[\Delta\ltimes\mcC\big((0,c),(k_0,c_0)\big)=\Big|n\mapsto(\Delta\ltimes\mcC)_0\big((n,c),(k_0,c_0)\big)\Big|\]
and thus obtain
\begin{align*}
H_!(V_0)^\cof(0,c) & \cong\Bigg|n\longmapsto\int\nolimits^{(k_0,c_0)}(\Delta\ltimes\mcC)_0\big((n,c),(k_0,c_0)\big)\otimes (V_0)^\cof(k_0,c_0)\Bigg| \\
& \cong\Big|n\longmapsto(V_0)^\cof(n,c)\Big|\simeq\Big|n\longmapsto W_nc\Big|\cong Wc=V(0,c)
\end{align*}
where the point of the weak equivalence in the last line is that the geometric realization is homotopy invariant and we may thus leave out the cofibrant replacement. 
\end{proof}

Since in this case $(\Delta\ltimes\mcC)_0$ is a full subcategory of $[\mcC^\op,\sSet]_0$ we may write the homotopy colimit in the following form
\[W\simeq\hocolim\nolimits_{J_0\da W} J_0P_W\cong\Bigg|n\longmapsto\coprod_{\makebox[70pt]{$\scriptstyle\Delta^{k_0}\otimes\mcC^{c_0}\ra\cdots\ra\Delta^{k_n}\otimes\mcC^{c_n}\ra W$}}\Delta^{k_0}\otimes\mcC^{c_0}\Bigg|.\]
This is a generalization of a cofibrant replacement of \cite{Dugger} from ordinary categories to simplicial ones. In this case we will write down explicitly the corresponding decomposition result for homotopy weighted colimits,
\begin{equation} \label{eqn_second_replacement}
W\star_\mcC D\simeq\Bigg|n\longmapsto\coprod_{\makebox[70pt]{$\scriptstyle\Delta^{k_0}\otimes\mcC^{c_0}\ra\cdots\ra\Delta^{k_n}\otimes\mcC^{c_n}\ra W$}}\Delta^{k_0}\otimes Dc_0\Bigg|.
\end{equation}
A nice formulation of this decomposition is the following.

\begin{scorollary}
Let $\mcM$ be a simplicial model category. Then for a full subcategory of $\mcM^\cof$ closed under weak equivalences it is equivalent to be closed under conical homotopy colimits and to be closed under homotopy weighted colimits.\qed
\end{scorollary}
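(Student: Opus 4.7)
The plan is to separate the two implications and observe that one is essentially a citation while the other is a direct assembly of the decomposition machinery already developed. Fix a full subcategory $\mcN\sub\mcM^\cof$ closed under weak equivalences. The direction from homotopy weighted closure to conical homotopy closure is immediate from Proposition~\ref{prop_conical_to_weighted}: every $\hocolim_\mcI D$ is already a homotopy weighted colimit $W\star_\mcI D$ with $W$ a cofibrant replacement of the terminal weight.

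For the converse I would feed an arbitrary homotopy weighted colimit $W\star_\mcC D$ with $D$ valued in $\mcN$ into the decomposition \eqref{eqn_second_replacement}, which expresses it, up to weak equivalence, as the geometric realization of a simplicial object whose $n$-th level is a coproduct of tensors $\Delta^{k_0}\otimes Dc_0$. The argument then reduces to showing that the three constructors appearing here---tensors with standard simplices, coproducts indexed by sets, and the geometric realization---each preserve $\mcN$ under the standing hypothesis.

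For the tensors I would invoke the weak equivalence $\Delta^k\otimes M\simeq\hocolim_{\Delta\Delta^k}M$ recorded earlier in the section, applied with $M=Dc_0\in\mcN$; this presents $\Delta^{k_0}\otimes Dc_0$ as a conical homotopy colimit of a constant diagram in $\mcN$, and hence places it in $\mcN$. Coproducts over sets are conical homotopy colimits over discrete categories, so each simplicial level lies in $\mcN$. Finally Proposition~\ref{prop_Reedy_cofibrancy} gives Reedy cofibrancy of the simplicial object in question, so its geometric realization coincides up to weak equivalence with $\hocolim_{\Delta^\op}$ of the same diagram, and one last application of the hypothesis concludes. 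The only subtle point I anticipate is bookkeeping: each conical $\hocolim$ in the chain must be fed a diagram that actually takes values in $\mcN$, which forces the order tensors-then-coproducts-then-realization and uses closure under weak equivalences to absorb the intermediate ``$\simeq$''s.
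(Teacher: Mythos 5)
Your proof is correct and follows essentially the same route the paper intends: Proposition~\ref{prop_conical_to_weighted} for one implication, and the decomposition \eqref{eqn_second_replacement} together with the earlier observation that tensors with simplices (and coproducts) are conical homotopy colimits for the other. The only cosmetic quibble is that Proposition~\ref{prop_Reedy_cofibrancy} is stated for $\sfB F$ rather than for the split nerve-type simplicial object appearing in \eqref{eqn_second_replacement}, but the same splitting argument gives Reedy cofibrancy there, so the realization agrees with $\hocolim_{\Delta^\op}$ up to weak equivalence as you claim.
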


We may in fact define $\sSet$-enriched homotopy weighted colimits on an arbitrary model category $\mcM_0$ (without a simplicial enrichment) and the claim still holds by the very same homotopy colimit decomposition of weights.

\subsection{A discrete presentation for simplicial presheaves on a simplicial category}
We start with a simple observation.

\begin{lemma} \label{lem_conical_dense_implies_simplicial}
Let $J\col\mcC\ra\mcM$ be an embedding of a full subcategory of a simplicial model category~$\mcM$. If $J$ is conical homotopy dense then it is also homotopy dense.
\end{lemma}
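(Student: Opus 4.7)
Fix a fibrant $M\in\mcM$ and write $W=\mcM(J-,M)$. The goal is to show the canonical map $W^\cof\star_\mcC J\to M$ is a weak equivalence. The plan is to transform the enriched weighted colimit into a conical homotopy colimit via Theorem~\ref{thm_homotopy_weighted_colimits} (in the form \eqref{eqn_second_replacement}) and then to compare with the conical homotopy colimit produced by the hypothesis.

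Set $J_\mcD\col\Delta\ltimes\mcC\to\mcM$, $(k,c)\mapsto\Delta^k\otimes Jc$. First I would apply \eqref{eqn_second_replacement} with $D=J$ to obtain
\[W^\cof\star_\mcC J\simeq\Bigl|n\mapsto\coprod_{\Delta^{k_0}\otimes\mcC^{c_0}\to\cdots\to\Delta^{k_n}\otimes\mcC^{c_n}\to W}\Delta^{k_0}\otimes Jc_0\Bigr|,\]
with the weak equivalence compatible with the augmentations to $M$. By standard $\sSet$-adjunction the indexing set is identified with chains in $(\Delta\ltimes\mcC)_0$ terminated by a morphism $\Delta^{k_n}\otimes Jc_n\to M$ in $\mcM_0$, that is, with the nerve of the comma category $(J_\mcD)_0\da M$. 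Hence
\[W^\cof\star_\mcC J\simeq\hocolim_{(J_\mcD)_0\da M}(J_\mcD)_0 P_M.\]

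Next I would compare this hocolim with $\hocolim_{J_0\da M}J_0 P_M$ via the inclusion $G\da M\col J_0\da M\hookrightarrow(J_\mcD)_0\da M$ induced by $c\mapsto(0,c)$. Since $\mcM$ is simplicial, the collapse $\Delta^k\to\Delta^0$ exhibits $\Delta^k\otimes Jc$ as simplicially homotopy equivalent to $Jc$ in $\mcM$; correspondingly there is a projection $\Pi\col (J_\mcD)_0\da M\to J_0\da M$, $(k,c,\phi)\mapsto(c,\phi\circ\iota_0)$ for $\iota_0\col Jc\to\Delta^k\otimes Jc$ the $0$-vertex inclusion, which satisfies $\Pi\circ(G\da M)=\id$. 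I would use this simplicial deformation retraction structure---a Quillen Theorem~A argument in which the comma $(k,c,\phi)\da(G\da M)$ is contracted via the simplicial deformation retraction of $\Delta^k\otimes Jc$ onto $Jc$---to conclude that $G\da M$ is homotopy final, so
\[\hocolim_{J_0\da M}J_0 P_M\xra{\sim}\hocolim_{(J_\mcD)_0\da M}(J_\mcD)_0 P_M.\]

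Finally, the assumed conical homotopy density of $J$ provides a weak equivalence $\hocolim_{J_0\da M}J_0 P_M\xra{\sim}M$, and chaining the three equivalences yields the desired $W^\cof\star_\mcC J\xra{\sim}M$. The main obstacle is the homotopy finality of $G\da M$: although $\Pi$ strictly retracts $G\da M$, the other composite $(G\da M)\circ\Pi$ is related to the identity only up to a simplicial homotopy that must be made coherent with the augmentation $\phi\col\Delta^k\otimes Jc\to M$. I expect this to go through by a direct construction, using that the retracting homotopy $\Delta^k\otimes\Delta^1\to\Delta^k$ can be chosen independently of $\phi$ and then pushed down along $\phi$.
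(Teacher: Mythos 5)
Your first and third steps are sound, but the second step is where the proof breaks, and it breaks for a reason the paper itself documents. The claim that $G\da M\col J_0\da M\hookrightarrow(J_\mcD)_0\da M$ is homotopy final is a purely formal statement making no use of the conical homotopy density of $J$; if it were true it would show that $\hocolim_{J_0\da M}J_0P_M$ and $\hocolim_{(J_\mcD)_0\da M}(J_\mcD)_0P_M$ agree for \emph{every} full subcategory, i.e.\ that conical homotopy density of $\mcC_0$ and of $(\Delta\ltimes\mcC)_0$ are always equivalent. The example ``Representables are not conical homotopy dense'' (with $\mcC(0,1)=K$ a non-contractible simplicial set with one vertex, $\mcM=[\mcC^\op,\sSet]$, $J=y_\mcC$ and $M$ the terminal presheaf) refutes this: there $\hocolim_{(J_\mcD)_0\da M}(J_\mcD)_0P_M\simeq M$ by the conical homotopy density of $(\Delta\ltimes\mcC)_0$, while $\hocolim_{J_0\da M}J_0P_M$ has value $K\vee N(0\da P)$ at $0$, which is not weakly contractible. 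Concretely, your retraction $\Pi$ is not even a functor: a morphism $(k,c)\ra(k',c')$ of $(\Delta\ltimes\mcC)_0$ is a map $\Delta^k\ra\Delta^{k'}\times\mcC(c,c')$ that need not carry the vertex $0$ to the vertex $0$, so $\phi\mapsto\phi\circ\iota_0$ is not natural; and the simplicial homotopies contracting $\Delta^k\otimes Jc$ onto $Jc$ are not morphisms of the comma category $(J_\mcD)_0\da M$, whose morphisms come only from $(\Delta\ltimes\mcC)_0$ over $M$, so no Quillen Theorem~A argument of this shape can succeed.

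The missing idea is that the hypothesis must be invoked not only at $M$ but at the whole cotensor family $M^{\Delta^n}$. The paper's proof decomposes the weight into its simplicial levels, $\mcM(J-,M)\mathbin{\widehat\star_\mcC}J\cong\big|n\mapsto\mcM_0(J-,M^{\Delta^n})\mathbin{\widehat\star_\mcC}J\big|$, applies conical homotopy density to each of the fibrant objects $M^{\Delta^n}$ to obtain a levelwise weak equivalence to $\big|n\mapsto M^{\Delta^n}\big|$, and concludes because this simplicial object is homotopy constant. (A secondary point: formula~\eqref{eqn_second_replacement} is established for a cofibrant weight, whereas $\mcM(J-,M)$ is only pointwise fibrant, so even your first step needs a word of justification before the chains may be taken to end at $W$ rather than at $W^\cof$.)
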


\begin{proof}
We need to show that for each fibrant $M$ the map
\[\mcM(J-,M)\mathbin{\widehat\star_\mcI}J\lra M\]
is a weak equivalence. Decomposing $\mcM(J-,M)$ into simplices we obtain
\[\mcM(J-,M)\mathbin{\widehat\star_\mcI}J\cong\Big|n\mapsto\mcM_0(J-,M^{\Delta^n})\mathbin{\widehat\star_\mcI}J\Big|\xlra{\sim}\Big|n\mapsto M^{\Delta^n}\Big|\xlra\sim M\]
with the first weak equivalence coming from the conical homotopy density. The second is implied by the simplicial object being homotopy constant.
\end{proof}

With the embedding $J_0\col(\Delta\ltimes\mcC)_0\ra[\mcC^\op,\sSet]_0$ as above we consider the corresponding Quillen adjunction
\[\xymatrix{
L\col[(\Delta\ltimes\mcC)_0^\op,\sSet] \ar@<2pt>[r] & [\mcC^\op,\sSet]\loc R. \ar@<2pt>[l]
}\]
Since $\Delta\ltimes\mcC$ is conical homotopy dense it is also homotopy dense and this Quillen adjunction is a homotopy reflection. Since $R$ preserves all conical homotopy colimits and preserves and reflects all weak equivalences one can localize $[(\Delta\ltimes\mcC)_0^\op,\sSet]$ with respect to the unit of the adjunction at the representables,
\[(\Delta\ltimes\mcC)_0^{(k,c)}\lra RL(\Delta\ltimes\mcC)_0^{(k,c)},\]
to obtain a Quillen equivalence. This yields a presentation result for $[\mcC^\op,\sSet]$, namely that it is Quillen equivalent to a localization of the category of simplicial presheaves on an ordinary category $(\Delta\ltimes\mcC)_0$. Consequently to define a simplicial presheaf on $\mcC$ is ``up to homotopy'' the same as to define a local $(\Delta\ltimes\mcC)_0$-presheaf.

\begin{remark}
It is not too hard to show that a pointwise fibrant presheaf $X$ is local if and only if all the canonical ``degeneracy'' maps $X(0,c)\ra X(k,c)$ are weak equivalences. If such a ``homotopy presheaf'' is given it is possible to rigidify it to an actual presheaf $L^\cof X$.
\end{remark}

\subsection{Presheaves over $\sAb$}

Let $\mcF\subseteq\sAb$ denote the full subcategory formed by the free abelian groups $\bbZ^p$. Here we think of $\sAb$ as a simplicial category. Note that $\hom_\mcF$ in fact takes values in discrete simplicial sets so that we may and will think of it as an ordinary category. We will now use the rigidification result of \cite{Badzioch} or \cite{Bergner} which says that the induced Quillen adjunction
\[\xymatrix{
L\col[\mcF^\op,\sSet]_\mathrm{loc} \ar@<2pt>[r] & \sAb\loc R \ar@<2pt>[l]
}\]
is a Quillen equivalence where the subscript ``$\mathrm{loc}$'' denotes a certain localization of the presheaf category. For a fibrant $X\in\sAb$ the derived counit is thus a weak equivalence and the same is therefore true for the non-localized model structure showing that $\mcF$ is homotopy dense and consequently $\Delta\times\mcF$ is conical homotopy dense (it is a product since $\mcF$ is an ordinary category). Again there results a weak equivalence
\[\Bigg|n\longmapsto\bigoplus_{\makebox[70pt]{$\scriptstyle\Delta^{k_0}\otimes\bbZ^{p_0}\ra\cdots\ra\Delta^{k_n}\otimes\bbZ^{p_n}\ra X$}}\Delta^{k_0}\otimes\bbZ^{p_0}\Bigg|\xlra{\sim}X\]
for every (fibrant) simplicial abelian group $X\in\sAb$ where the chains indexing the direct sum consist of ``tensor products'' of maps between $\Delta^{k_i}$ and homomorphisms between $\bbZ^{p_i}$ since they are given by the elements of $\Delta\times\mcF$. We conclude again that in $\sAb$ the homotopy tensors are generated from the unit $\bbZ$ by conical homotopy colimits.

We will now extend the presentation result to the presheaf category $[\mcC^\op,\sAb]$. Let $\mcF$ denote the free completion of $\mcC$ under finite direct sums, i.e.~the full subcategory of $[\mcC^\op,\sAb]$ formed by the finite sums of representables.

\begin{theorem}
The subcategory $\mcF$ is simplicially homotopy dense. Consequently $\Delta\ltimes\mcF$ is conical homotopy dense.
\end{theorem}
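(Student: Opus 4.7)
The plan is to first establish the simplicial homotopy density of $\mcF$ by bootstrapping from the homotopy density of the Yoneda embedding, and then to deduce conical homotopy density of $\Delta\ltimes\mcF$ by the same decomposition argument used in the simplicial case (the analog of \eqref{eqn_second_replacement} and the corollary following it).

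For the first step, I factor the Yoneda embedding as $y_\mcC = J\circ i$, where $i\col\mcC\ra\mcF$ is the canonical inclusion (sending $c$ to $\mcC^c$) and $J\col\mcF\ra[\mcC^\op,\sAb]$ is the full embedding. Since $y_\mcC$ is homotopy dense, the derived counit $W^\cof\star_\mcC y_\mcC\ra X$, with $W = [\mcC^\op,\sAb](y_\mcC-,X)\cong X$, is a weak equivalence for every fibrant~$X$. By the Kan-extension identity $(i_!W)\star_\mcF J\cong W\star_\mcC(J\circ i)$ and the fact that $i_!$ is left Quillen, it suffices to show that the canonical comparison $i_!W\ra V$ is a pointwise weak equivalence in $[\mcF^\op,\sAb]$, where $V = [\mcC^\op,\sAb](J-,X)$ is the canonical weight associated with~$J$. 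I check this by evaluating at a typical $F = \bigoplus_{j=1}^n \mcC^{c_j}\in\mcF$: since $\mcF$ is the free completion of $\mcC$ under finite direct sums one has $\mcF(c,F)\cong\bigoplus_j\mcC(c,c_j)$, and by distributivity of the tensor over direct sums together with the enriched Yoneda lemma,
\[(i_!W)(F)\;\cong\;\int\nolimits^{c\in\mcC}\mcF(c,F)\otimes X(c)\;\cong\;\bigoplus\nolimits_{j=1}^n X(c_j),\]
while $V(F)\cong\prod_{j=1}^n X(c_j)$ by additivity. In the additive category $\sAb$ the canonical morphism $\bigoplus_j\ra\prod_j$ is an isomorphism for finite~$n$, so $i_!W\ra V$ is in fact a pointwise isomorphism. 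The simplicial homotopy density of $\mcF$ then follows by assembling the weak equivalences $V^\cof\star_\mcF J\simeq (i_!W)^\cof\star_\mcF J\simeq W^\cof\star_\mcC y_\mcC\simeq X$.

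For the second step, I follow the proof of the simplicial theorem about $(\Delta\ltimes\mcC)_0\subseteq[\mcC^\op,\sSet]_0$ verbatim, with $\mcF$ in place of~$\mcC$ and using the simplicial homotopy enrichment of $[\mcC^\op,\sAb]$ from~\eqref{eqn_simplicial_enrichment}. Concretely, I set up the diagram $\mcF\xra{G}\Delta\ltimes\mcF\xla{H}(\Delta\ltimes\mcF)_0$ with canonical weights $V$ on $\mcF$, $V'$ on $\Delta\ltimes\mcF$, and $V_0$ on $(\Delta\ltimes\mcF)_0$, and verify that the comparisons $G_!V\ra V'$ and $H_!(V_0)^\cof\ra V'$ are weak equivalences by the same pointwise computations as in the simplicial case. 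The key simplification, as before, is that every object $(k,F)$ in $\Delta\ltimes\mcF$ is simplicially homotopy equivalent to $(0,F)$, reducing the checks to the image of~$G$, where $G_!V\ra V'$ is an isomorphism by fullness.

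The main obstacle is the additivity computation of the first step: the structural identity $\mcF(c,\bigoplus_j c_j)\cong\bigoplus_j\mcC(c,c_j)$ (which encodes that $\mcF$ is the free completion under finite direct sums), combined with the coincidence of finite coproducts and products in~$\sAb$, is precisely what makes the comparison $i_!W\ra V$ an isomorphism; without additivity this argument would break down. Once the first step is in hand, the second is a routine transcription of the simplicial argument.
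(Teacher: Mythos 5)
There is a genuine gap, and it lies in which enrichment you are working in. The theorem asserts that $\mcF$ is \emph{simplicially} homotopy dense: the canonical weight is the simplicial presheaf $\map(J-,X)$ on the underlying simplicial category of $\mcF$, its cofibrant replacement is taken in the projective model structure on $[\mcF^\op,\sSet]$, and the relevant adjunction is $L\col[\mcF^\op,\sSet]\rightleftarrows[\mcC^\op,\sAb]\loc R$. Your argument runs entirely in the $\sAb$-enriched world: the weight $W=[\mcC^\op,\sAb](y_\mcC-,X)\cong X$ lives in $[\mcC^\op,\sAb]$, and $i_!W$ and $V$ live in $[\mcF^\op,\sAb]$, so your (correct) additivity computation proves the $\sAb$-enriched homotopy density of $\mcF$ --- which is indeed an essentially formal consequence of the Yoneda case --- but not the simplicial statement. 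The two do not coincide: a cofibrant replacement in $[\mcF^\op,\sSet]$ is built from cells $\mcF^F\otimes\Delta^n$ and $L$ applies free abelian group constructions to them, so the derived counit $L^\cof RX\ra X$ is a genuinely different map from the $\sAb$-enriched one. Note also that your starting input, if read simplicially, is false: $y_\mcC\col\mcC\ra[\mcC^\op,\sAb]$ is \emph{not} simplicially homotopy dense in general (already for $\mcC$ the one-object $\sAb$-category on $\bbZ$ the single object $\bbZ$ does not detect the addition --- one needs the binary operation living on $\bbZ^2\ra\bbZ$), which is exactly why the paper passes to the closure $\mcF$ under finite direct sums in the first place.

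The passage from the additive statement to the simplicial one is the real content of the theorem; it is a rigidification result in the style of \cite{Badzioch} and \cite{Bergner}, and the paper's proof has three ingredients that your proposal does not touch: (i) localizing $[\mcF^\op,\sSet]$ at the unit maps $\eta_W$ for $W$ a finite direct sum of representables; (ii) the observation that $R$ preserves filtered colimits and geometric realizations, which together with the decomposition \eqref{eqn_first_replacement} propagates ``the unit is a weak equivalence'' from finite sums of representables to all cofibrant simplicial presheaves; and (iii) a triangle-identity argument, using that $R$ reflects weak equivalences, to convert this into the derived counit being a weak equivalence. Your finite-biproduct computation is morally the reason $R$ is fully faithful and $RX$ is a local object, so it replaces only the easy first step of the paper's proof, not the sifted-colimit argument that carries the theorem. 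Your second step (deducing conical homotopy density of $\Delta\ltimes\mcF$) is a reasonable transcription of the earlier simplicial argument, but it is conditional on the first step being established in the simplicial enrichment.
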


\begin{proof}
We give an outline of a proof based on the proofs of \cite{Badzioch} and \cite{Bergner} of the rigidification of algebras. In our case the theory will be multi-sorted and simplicially enriched and this is not covered by any of these papers.

We have a simplicial Quillen adjunction which we want to show to be a homotopy reflection.
\[\xymatrix{
L\col[\mcF^\op,\sSet] \ar@<2pt>[r] & [\mcC^\op,\sAb]\loc R. \ar@<2pt>[l]
}\]
It is rather straightforward to show that the counit $LRX\ra X$ is an isomorphism (especially in its equivalent form of $R$ being full and faithful). In effect $\mcF$ is simplicially dense. In the following we will need two simple observations about $R$ -- it preserves sifted colimits as it is given by the hom-functors out of finite direct sums of representables. More precisely we will need that $R$ preserves filtered colimits and the geometric realization. The sifted colimits and their homotopy versions are treated in \cite{RosickySifted}. The second observation about $R$ is that it preserves and reflects all weak equivalences. In particular the unit and the derived unit are weakly equivalent.

We localize $[\mcF^\op,\sSet]$ with respect to the set of all unit maps $\eta_W\col W\ra RLW$ with $W$ running over finite direct sums of representables, $W=\mcF^{F_1}\oplus\cdots\oplus\mcF^{F_q}$. Since all such $W$ and $RLW$ are cofibrant ($RLW\cong\mcF^{F_1\oplus\cdots\oplus F_q}$) and $L\eta_W$ are isomorphisms ($L$ is a reflection), $L^\cof\eta_W$ must be weak equivalences. Therefore we obtain a new Quillen adjunction
\[\xymatrix{
L\col[\mcF^\op,\sSet]_\mathrm{loc} \ar@<2pt>[r] & [\mcC^\op,\sAb]\loc R \ar@<2pt>[l]
}\]
whose unit is now a weak equivalence on finite direct sums of representables. Because $R$ preserves sifted colimits the unit is a weak equivalence on all cofibrant objects as by \eqref{eqn_first_replacement} conical homotopy colimits are generated by coproducts (which are in turn generated by finite coproducts and filtered colimits) and the geometric realization\footnote{Filtered colimits are homotopy invariant in $\sSet$ and $\sAb$ by a very simple argument (the generating cofibrations have both domains and codomains finitely generated). The same goes with the geometric realization in $\sSet$ since every simplicial object in this category is Reedy cofibrant (in $\sAb$ this is not true but we do not need this). Consequently every simplicial object in $[\mcF^\op,\sSet]_\mathrm{inj}$ with its injective model structure is also Reedy cofibrant and on such the geometric realization is homotopy invariant. Since this statement depends only on weak equivalences the same is true in the projective model structure.} -- the unit commutes with both.

We will now show that the derived counit at a fibrant $X$ is also a weak equivalence. Since $R$ reflects weak equivalences we apply it to the derived counit $\varepsilon^\cof$ to obtain the diagram
\[\xymatrix{
(RX)^\cof \ar[r]^-\eta_-\sim & RL(RX)^\cof \ar[r] \ar@{=}[d] & (RX)^\cof \ar[d]^\sim \\
& RL^\cof RX \ar[r]^-{R\varepsilon^\cof} & RX
}\]
where the composition across the top row is the identity and the unit is a (local) weak equivalence by the preceding. Consequently the map at the bottom is a weak equivalence and thus so is the derived counit. Since the derived counit is unchanged by the localization the same holds in the original Quillen adjunction and we may conclude that $\mcF$ is simplicially homotopy dense.
\end{proof}

Therefore homotopy weighted colimits over $\sAb$ are generated by conical homotopy colimits and tensors with objects of $\Delta\ltimes\mcF$. These are given by
\[\Delta^k\otimes(Dc_1\oplus\cdots\oplus Dc_p)\simeq Dc_1\oplus\cdots\oplus Dc_p\]
and are again (up to weak equivalence) generated by conical homotopy colimits.

\begin{scorollary}
Let $\mcM$ be a model $\sAb$-category. Then for a full subcategory of $\mcM^\cof$ closed under weak equivalences it is equivalent to be closed under conical homotopy colimits and to be closed under homotopy weighted colimits.\qed
\end{scorollary}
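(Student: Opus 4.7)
The plan follows the template of the preceding corollary for simplicial model categories. One direction is immediate: conical homotopy colimits are themselves special cases of homotopy weighted colimits by Proposition~\ref{prop_conical_to_weighted}, so any full subcategory of $\mcM^\cof$ closed under weak equivalences and homotopy weighted colimits is automatically closed under conical homotopy colimits.

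For the converse, assume $\mcN \subseteq \mcM^\cof$ is closed under weak equivalences and conical homotopy colimits, and consider a homotopy weighted colimit $W \star_\mcC D$ of a diagram $D\col\mcC\ra\mcM$ landing pointwise in $\mcN$. Theorem~\ref{thm_homotopy_weighted_colimits} presents $W \star_\mcC D$ up to weak equivalence as a conical homotopy colimit of homotopy tensors $Wc_n \otimes \mcC(c_0,\ldots,c_n) \otimes Dc_0$, so by our closure hypothesis it suffices to reduce a general homotopy tensor $K \otimes M$, with $K \in \sAb$ cofibrant and $M \in \mcN$, to a conical homotopy colimit of objects already in $\mcN$.

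For this step I would invoke the conical homotopy density of $\Delta \ltimes \mcF \subseteq \sAb$ established in the theorem just proved (specialized to $\mcC = [0]$, so that $\mcF$ is the subcategory of free abelian groups $\bbZ^p$). This yields a weak equivalence presenting $K$ as a conical homotopy colimit of objects of the form $\Delta^{k_i} \otimes \bbZ^{p_i}$. Because $-\otimes M$ is left Quillen when $M$ is cofibrant, applying it produces a weak equivalence presenting $K \otimes M$ as a conical homotopy colimit of the tensors $(\Delta^{k_i} \otimes \bbZ^{p_i}) \otimes M$. As recorded in the paragraph preceding the corollary, each such tensor is weakly equivalent to the finite direct sum $M^{\oplus p_i}$, which is itself a conical homotopy colimit of $M$ (a finite coproduct of cofibrant copies). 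Two applications of closure under conical homotopy colimits and weak equivalences then place $K \otimes M$, and hence $W \star_\mcC D$, inside $\mcN$.

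The only technical point requiring care is that the density statement inside $\sAb$ genuinely transports along $-\otimes M$: the presenting diagram of $K$ is pointwise cofibrant in $\sAb$ and $M$ is cofibrant in $\mcM$, so the Quillen bifunctor axiom for the action $\sAb \times \mcM \to \mcM$ guarantees the tensored diagram is pointwise cofibrant in $\mcM$, and the left Quillen functor $-\otimes M$ therefore preserves its conical homotopy colimit. Everything else is bookkeeping.
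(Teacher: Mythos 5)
Your proof is correct and uses the same two ingredients the paper relies on: Theorem~\ref{thm_homotopy_weighted_colimits} to reduce homotopy weighted colimits to conical homotopy colimits of homotopy tensors, and the rigidification-based homotopy density of the free abelian groups to reduce a tensor $K\otimes M$ to conical homotopy colimits of finite direct sums $M^{\oplus p}$. The packaging differs slightly from the paper's: the corollary is placed after the multi-sorted density theorem for $\Delta\ltimes\mcF\subseteq[\mcC^\op,\sAb]$, and the intended argument decomposes the \emph{weight} $W$ there and pushes it forward along the cocontinuous left Quillen functor $-\star_\mcC D$ (following the stated principle that one should concentrate on decompositions of weights), whereas you decompose the colimit itself via Theorem~\ref{thm_homotopy_weighted_colimits} and then only need the single-sorted case $\mcC=[0]$, i.e.\ the classical Badzioch--Bergner density of $\{\bbZ^p\}$ in $\sAb$. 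That is a genuine economy of dependencies -- your route does not use the multi-sorted, simplicially enriched rigidification for which the paper only sketches a proof -- at the cost of invoking the main decomposition theorem explicitly; both routes are sound, and your handling of the transport along the left Quillen functor $-\otimes M$ (pointwise cofibrancy, preservation of conical homotopy colimits, and $\Delta^k\otimes\bbZ^p\otimes M\simeq M^{\oplus p}$) is exactly the justification the paper leaves implicit.
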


The Dold-Kan correspondence provides an equivalence of $\sAb$ with the category $\Ch_+$ of non-negatively graded abelian groups. This equivalence respects the model structures $\Ch_+$ and the monoidal structures up to weak equivalence. The same results we obtained for $\sAb$ are therefore true for $\Ch_+$. Moreover all of the above can be generalized to simplicial $R$-modules and non-negatively graded chain complexes of $R$-modules.

\subsection{The enrichment in $\Cat$ and $\Ch$.}

We will briefly describe the situation for the enrichment in the categorical model structure on $\Cat$ (also called the folk model structure) and for the category $\Ch$ of unbounded chain complexes. We begin with a general consideration. We recall the embedding \eqref{eqn_embedding_of_sset} of simplicial sets into any monoidal model category. The image of $K\in\sSet$ is then seen to be
\[K\otimes_\Delta\bbI\simeq(\hocolim\nolimits_{\Delta K}*)\otimes_\Delta\bbI=\hocolim\nolimits_{\Delta K}(*\otimes_\Delta\bbI)=\hocolim\nolimits_{\Delta K}\sfI.\]
and is therefore the homotopy colimit of the constant diagram at $\sfI$. In the case $\Cat$ the image of $K$ is precisely the fundamental groupoid of $K$. It is in fact easily seen that groupoids are exactly the conical homotopy colimit closure of the unit $[0]\in\Cat$. In order to obtain all categories one needs to start with all $[n]$ (or at least those with $n\leq2$). This yields a presentation
\[\xymatrix{
L\col[\Delta^\op,\sSet]_\mathrm{loc} \ar@<2pt>[r] & \Cat\loc R. \ar@<2pt>[l]
}\]

A similar situation occurs with unbounded chain complexes. The image of $K\in\sSet$ in $\Ch$ might be taken to be the chain complex $\bbZ K$ assosiated with the free abelian group on $K$. Therefore it only has homology in non-negative degrees and again one may show that complexes whose homology is concentrated in non-negative dimensions provide exactly the conical homotopy colimit completion of the unit $\bbZ$ (the truncation restricts to a natural weak equivalence on this subcategory and a conical homotopy colimit of chain complexes concentrated in non-negative dimensions is a chain complex of the same type). In particular $\bbZ[-1]$ does not lie in this closure and consequently the desuspension $\Sigma^{-1}=\bbZ[-1]\otimes-$ cannot be expressed via conical homotopy colimits.

\section{A theorem of Dwyer and Kan}

The goal of this section is to study $\mcV$-functors between small $\mcV$-categories which induce Quillen equivalences on the corresponding enriched presheaf categories.

Let $F\col\mcD\ra\mcC$ be a $\mcV$-functor between small locally $\mcV$-flat $\mcV$-categories. There is then a Quillen adjunction
\[F_!\col[\mcD^\op,\mcV]\rightleftarrows[\mcC^\op,\mcV]\loc F^*.\]
Both these functors preserve weighted colimits and on representables $F_!\mcD^d=\mcC^{Fd}$ so that
\[F_!W=Wd\star_{d\in\mcD}\mcC^{Fd}=W\star_\mcD y_\mcC F.\]
Of course $F^*V=VF$. Now we come to the homotopy properties of the two functors. Since they both preserve pointwise cofibrant diagrams and weighted colimits it is also true that they preserve homotopy weighted colimits. Moreover the same is true for the unit $\eta$ and the counit $\varepsilon$ of the adjunction $F_!\dashv F^*$. We will describe these explicitly now -- the counit
\[F_!F^*\mcC^c=F_!\mcC(F-,c)=\mcC(Fd,c)\star_{d\in\mcD}\mcC(-,Fd)\xlra{\varepsilon}\mcC(-,c)=\mcC^c\]
is the composition map and the unit
\[\mcD^d=\mcD(-,d)\xlra{\eta}\mcC(F-,Fd)=F^*\mcC^{Fd}=F^*F_!\mcD^d\]
is simply given by the functor $F$. It will be also useful that $F^*$ preserves all weak equivalences.

We say that $F$ is homotopically full and faithful (the name is taken from \cite{GoerssJardine}) if each component $\mcD(d',d)\ra\mcC(Fd',Fd)$ is a weak equivalence.

\begin{theorem}
Let $F$ be a $\mcV$-functor as above that is surjective on objects. Then the adjunction $F_!\dashv F^*$ is a Quillen equivalence if and only if $F$ is homotopically full and faithful.
\end{theorem}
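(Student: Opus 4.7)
The plan is to prove the two implications using two simple observations. First, the unit evaluated at a representable $\mcD^d$ is the map $\mcD^d\to F^*F_!\mcD^d=\mcC(F-,Fd)$ whose component at $d'$ is exactly the hom-object map $\mcD(d',d)\to\mcC(Fd',Fd)$ induced by $F$. Second, $F^*$ preserves every weak equivalence by definition (it is pointwise), and---precisely because $F$ is assumed surjective on objects---$F^*$ also reflects weak equivalences, since a morphism in $[\mcC^\op,\mcV]$ is a pointwise weak equivalence iff it is one on every object of the form $Fd$.

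The ``only if'' direction is then immediate: a Quillen equivalence forces the derived unit at each cofibrant $\mcD^d$ to be a weak equivalence, and $F^*$ preserving weak equivalences makes this the same as the ordinary unit being one; unravelling gives homotopical full and faithfulness. (Note that surjectivity on objects is not used here.) For the ``if'' direction, I would combine the reflection of weak equivalences by $F^*$ with the triangle identity applied after cofibrant replacement of $F^*Y$,
\[(F^*Y)^\cof\xlra{\eta^\cof}F^*F_!(F^*Y)^\cof\xlra{F^*\varepsilon^\cof}F^*Y,\]
whose total composite is the cofibrant-replacement weak equivalence. Hence the derived counit $F_!(F^*Y)^\cof\to Y$ at a fibrant $Y$ will be a weak equivalence as soon as the derived unit $X\to F^*F_!X$ is a weak equivalence for every cofibrant $X\in[\mcD^\op,\mcV]$; and since $F^*$ preserves weak equivalences, this derived unit is just $\eta_X$ (no fibrant replacement of $F_!X$ is needed).

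To establish this last fact I would invoke Theorem~\ref{thm_homotopy_weighted_colimits}: for cofibrant $X$ the isomorphism $X\cong X\star_\mcD\mcD_\bullet$ is a homotopy weighted colimit and hence is weakly equivalent to the geometric realization of the Reedy cofibrant simplicial object whose $n$-simplices are direct sums of homotopy tensors $Xd_n\otimes\mcD(d_0,\ldots,d_n)\otimes\mcD^{d_0}$. Both $F_!$ and $F^*$ are cocontinuous $\mcV$-functors preserving pointwise cofibrant diagrams, so they commute with this decomposition, and the unit $\eta_X$ becomes the geometric realization of a simplicial map acting as the identity on the factors $Xd_n\otimes\mcD(d_0,\ldots,d_n)$ and as $\eta_{\mcD^{d_0}}\col\mcD^{d_0}\to\mcC(F-,Fd_0)$ on the remaining factor. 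Homotopical full and faithfulness makes this latter map a pointwise weak equivalence; tensoring with cofibrant objects (by the model $\mcV$-category axiom) and realizing the resulting Reedy cofibrant simplicial objects then preserves this weak equivalence. The main obstacle I anticipate is bookkeeping: one has to verify that the flatness hypotheses needed for Proposition~\ref{prop_Reedy_cofibrancy} and Theorem~\ref{thm_homotopy_weighted_colimits} are met by $\mcD$ here (perhaps falling back on the homotopy colimit variant valid for merely locally $\mcM$-flat categories), and that the simplicial objects produced after applying $F_!$ and $F^*$ remain Reedy cofibrant, so that a level weak equivalence yields a weak equivalence of realizations.
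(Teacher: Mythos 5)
Your proposal is correct, and the ``only if'' half together with the identification of the unit at $\mcD^d$ with the hom-object map is exactly what the paper does. Where you genuinely diverge is the ``if'' direction: the paper computes the derived counit directly at the representables $\mcC^{Fd}$, observing that by homotopical full and faithfulness $\mcD^d\to F^*\mcC^{Fd}$ is a cofibrant replacement, so that the derived counit at $\mcC^{Fd}$ composes to the Yoneda isomorphism and is therefore a weak equivalence; surjectivity on objects enters only to guarantee that every representable $\mcC^c$ is of this form, and the extension to all of $\Ho[\mcC^\op,\mcV]$ is left implicit via the principle that everything is a homotopy weighted colimit of representables and that $(F_!)^\cof$, $F^*$, $\eta$ and $\varepsilon$ all preserve such colimits. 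You instead use surjectivity to see that $F^*$ reflects weak equivalences and then dispose of the counit entirely by the triangle identity (the standard criterion that a Quillen adjunction whose right adjoint reflects weak equivalences and whose derived unit is invertible is a Quillen equivalence); this is arguably cleaner and isolates the real content in the single claim that $\eta_X$ is a weak equivalence for all cofibrant $X$, which you then prove by making the paper's implicit globalization step explicit through the bar resolution of Theorem~\ref{thm_homotopy_weighted_colimits}. The price is exactly the bookkeeping you flag: in the levelwise comparison you must tensor the weak equivalence $\mcD(d',d_0)\to\mcC(Fd',Fd_0)$, whose source and target are only assumed $\mcV$-flat rather than cofibrant, with a cofibrant object, and check Reedy cofibrancy of the target simplicial object (Proposition~\ref{prop_Reedy_cofibrancy} applies after evaluating at each $d'$, since $Xd_n\otimes\mcC(Fd',Fd_0)$ is cofibrant by flatness). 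When the unit of $\mcV$ is cofibrant, flat objects are cofibrant and Ken Brown's lemma settles this; in full generality this point deserves a sentence, but the same issue is buried in the paper's own (unstated) globalization argument, so your route is no worse off.
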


\begin{proof}
We study the conditions under which the derived unit is a weak equivalence. By the previous we have
\[\xymatrix@R=10pt{
\mcD^d \ar[r]^-{\eta} & F^*F_!\mcD^d \ar[r]^-\sim & (F^*)^\fib F_!\mcD^d \\
\mcD(-,d) \ar@{=}[u] \ar[r]_-F & \mcC(F-,Fd) \ar@{=}[u]
}\]
and thus the derived unit being a weak equivalence is equivalent to $F$ being homotopically full and faithful.

In the opposite direction we compute the derived counit at $\mcC^{Fd}$. Technically we should compute it at its fibrant replacement but since $F^*$ preserves all weak equivalences this is not necessary. By the above the cofibrant replacement of $F^*\mcC^{Fd}=\mcC(F-,Fd)$ may be taken to be $\mcD^d=\mcD(-,d)$ with the augmentation $\mcD^d\xra{F}F^*\mcC^{Fd}$ so that
\[\xymatrix@R=10pt{
(F_!)^\cof F^*\mcC^{Fd} \ar[r] \ar@{=}[d] & F_!F^*\mcC^{Fd} \ar[rr]^-\varepsilon \ar@{=}[d] & & \mcC^{Fd} \ar@{=}[d] \\
\mcD(d',d)\star_{d'\in\mcD}\mcC(-,Fd') \ar[r]_-{F\star\id} & \mcC(Fd',Fd)\star_{d'\in\mcD}\mcC(-,Fd') \ar[rr]_-{\mathrm{composition}} & & \mcC(-,Fd)
}\]
with the bottom row the Yoneda isomorphism (with inverse $f\mapsto\id_d\otimes f$).
\end{proof}

Now we come to the general case when $F$ is not necessarily surjective on objects. We say that an object $c\in\mcC$ is a \emph{homotopy absolute homotopy weighted colimit} of a $\mcV$-functor $F\col\mcD\ra\mcC$ weighted by a cofibrant weight $W\col\mcD^\op\ra\mcV$ if the homotopy weighted colimit $W\star_\mcD y_\mcC F$ in $[\mcC^\op,\mcV]$ is weakly equivalent to $\mcC^c$.

In the non-homotopy case absolute colimits may be characterized precisely as those colimits which are preserved by the Yoneda embedding, i.e.~$W\star_\mcD F$ is absolute if and only if
\[W\star_\mcD y_\mcC F\xlra\cong y_\mcC(W\star_\mcD F)\]
(we will prove this in the homotopy setup in the next paragraph). We defined homotopy absolute homotopy weighted colimits as homotopy versions of this characterization since in $\mcC$ there might not be enough objects to do homotopy theory and also to make sense of $W\star_\mcD F$.

When $\mcC$ is locally cofibrant and fibrant our definition may be rephrased as follows. There is a pointwise weak equivalence $W\star_\mcD y_\mcC F\xlra{\sim}\mcC^c$ which by adjunction corresponds to a morphism
\[W\ra[\mcC^\op,\mcV](y_\mcC F-,\mcC^c)\cong\mcC(F-,c)\]
in $[\mcD^\op,\mcV]$. Thus there is a $W$-cone from $F$ to $c$ in $\mcC$ which is a ``weighted colimit cone up to homotopy'' and which is moreover preserved by the Yoneda embedding. To see that it is then in fact preserved by any pointwise cofibrant functor $D\col\mcC\ra\mcM$ to any model $\mcV$-category $M$ observe that $D$ extends uniquely (up to isomorphism!) to a cocontinuous $\mcV$-functor $[\mcC^\op,\mcV]\ra\mcM$ sending $V\mapsto V\star_\mcC D$ and it is moreover left Quillen. Therefore it preserves homotopy colimits (up to weak equivalence) and if $\mcC^c$ is a homotopy weighted colimit $W\star_\mcD y_\mcC F$ then also $Dc$ will be a homotopy weighted colimit $W\star_\mcD DF$.

One may do a lot of homotopy theory in locally fibrant $\mcV$-categories, see \cite{RosickyEnriched}.

\begin{theorem}\label{theorem_Dwyer_Kan_absolute}
Let $F$ be a $\mcV$-functor as above. Then the adjunction $F_!\dashv F^*$ is a Quillen equivalence if and only if $F$ is homotopically full and faithful and every object $c\in\mcC$ is a homotopy absolute homotopy weighted colimit of $F$.
\end{theorem}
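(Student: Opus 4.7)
The plan is to invoke the standard characterization that the Quillen adjunction $F_! \dashv F^*$ is a Quillen equivalence if and only if (i) the derived unit $W \to F^*F_!W$ is a weak equivalence for every cofibrant $W \in [\mcD^\op,\mcV]$, and (ii) $F^*$ reflects weak equivalences between fibrant objects. Since $F^*$ preserves all weak equivalences (it is just precomposition), one does not need to distinguish between $F^*F_!W$ and its fibrant replacement in (i). Both directions then reduce to verifying (i) and (ii) on a tractable class of test objects, namely the representables.

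For the $(\Rightarrow)$ direction, condition (i) evaluated at the representable $W = \mcD^d$ is the map $\mcD(-,d) \xrightarrow{F} \mcC(F-,Fd)$, whose componentwise weak equivalence is by definition the homotopical full-faithfulness of $F$. To exhibit each $c \in \mcC$ as a homotopy absolute homotopy weighted colimit, I would set $W_c = (F^*(\mcC^c)^\fib)^\cof$; this is a cofibrant weight on $\mcD$, and the derived counit combined with $F^*$ preserving weak equivalences produces a zig-zag of weak equivalences $F_!W_c = W_c \star_\mcD y_\mcC F \simeq (\mcC^c)^\fib \simeq \mcC^c$ in $[\mcC^\op,\mcV]$, as required.

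For the $(\Leftarrow)$ direction I verify (i) and (ii) separately. For (i), the unit is a weak equivalence on each representable $\mcD^d$ by hypothesis, exactly as above. To extend to arbitrary cofibrant $W$, I would use the bar resolution $|\sfB(W \otimes \mcD_\bullet)| \xrightarrow{\sim} W$ furnished by the lemma preceding Theorem~\ref{thm_homotopy_weighted_colimits}. Both $F_!$ and $F^*$ are cocontinuous (each is a left adjoint), so they commute with the bar construction and its geometric realisation; the levelwise unit weak equivalence $\mcD^{d_0} \xrightarrow{\sim} F^*F_!\mcD^{d_0}$ then induces a Reedy weak equivalence between bar constructions (Reedy cofibrancy being supplied by Proposition~\ref{prop_Reedy_cofibrancy}), whose geometric realisation represents $\eta_W$ up to weak equivalence.

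For (ii), take fibrant $Y_1, Y_2$ with $F^*Y_1 \to F^*Y_2$ a weak equivalence; I need $Y_1(c) \to Y_2(c)$ to be a weak equivalence in $\mcV$ for every $c \in \mcC$. By the homotopy absolute hypothesis there is a cofibrant $W_c$ together with a weak equivalence of cofibrant objects $F_!W_c \xrightarrow{\sim} \mcC^c$. Taking the enriched hom into a fibrant $Y_i$ and using the adjunction yields a natural chain
\[Y_i(c) = [\mcC^\op,\mcV](\mcC^c,Y_i) \xrightarrow{\sim} [\mcC^\op,\mcV](F_!W_c,Y_i) \cong [\mcD^\op,\mcV](W_c,F^*Y_i),\]
where the first identification is enriched Yoneda and the middle map is a weak equivalence because $\mcC^c$ and $F_!W_c$ are cofibrant and $Y_i$ is fibrant. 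Since $W_c$ is cofibrant and $F^*Y_i$ is fibrant, the rightmost expression is homotopically correct, so the weak equivalence $F^*Y_1 \xrightarrow{\sim} F^*Y_2$ descends to a weak equivalence $Y_1(c) \to Y_2(c)$. The main obstacle, absent in the surjective-on-objects version, is precisely this last step: without $c$ lying in the image of $F$ one cannot test weak equivalences by direct restriction along $F$, and the homotopy absolute hypothesis is exactly what bridges evaluation at $c \in \mcC$ with a derived hom over $\mcD$ that $F^*$ controls.
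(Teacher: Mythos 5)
Your proof is correct, and both the forward direction and the treatment of the derived unit follow the paper's line: the paper also reads off homotopical full-faithfulness from the unit at representables, extends to all cofibrant weights using that every cofibrant weight is a homotopy weighted colimit of representables (your realisation of $|\sfB(W\otimes\mcD_\bullet)|\xra{\sim}W$ is the concrete form of the paper's assertion that $\eta$ preserves homotopy weighted colimits), and obtains the weight witnessing absoluteness of $c$ from the derived counit at $\mcC^c$. Where you genuinely diverge is the sufficiency of the colimit condition. The paper argues covariantly: the derived counit $(F_!)^\cof F^*\Ra\Id$ is a weak equivalence at each $\mcC^{Fd}$; since $(F_!)^\cof$ and $F^*$ preserve homotopy weighted colimits, absoluteness propagates this to each $\mcC^c$, and then to every presheaf because every presheaf is a homotopy weighted colimit of representables. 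You instead invoke the dual criterion that $F^*$ reflect weak equivalences between fibrant objects and verify it by the derived-Yoneda computation $Y(c)\simeq[\mcD^\op,\mcV](W_c,F^*Y)$. Both routes are sound. The paper's stays entirely inside the machinery it has already assembled and never needs enriched mapping objects to be homotopically meaningful; yours is more pointwise and arguably more transparent, but it implicitly uses that the projective model structure on $[\mcC^\op,\mcV]$ is a $\mcV$-model category, so that $[\mcC^\op,\mcV](-,Y)$ with $Y$ fibrant inverts weak equivalences between cofibrant objects --- this is true (check the pullback-corner condition on the generating cofibrations $\mcC^c\otimes K\ra\mcC^c\otimes L$ via Yoneda), but it is a fact the paper neither states nor needs. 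Two small points to make explicit: ``weakly equivalent'' in the definition of homotopy absoluteness may only provide a zig-zag rather than a single map $F_!W_c\ra\mcC^c$, which is harmless for the derived hom since both objects are cofibrant; and the Reedy cofibrancy of the target bar construction $\sfB(W\otimes F^*F_!\mcD_\bullet)$ is not literally supplied by Proposition~\ref{prop_Reedy_cofibrancy} (the hom-objects $\mcC(F-,Fd)$ need not be pointwise cofibrant under mere local $\mcV$-flatness), so that comparison is cleaner if phrased, as the paper does, through preservation of homotopy weighted colimits, or by identifying $|F^*F_!\sfB|$ with $F^*|F_!\sfB|$ and using that $F^*$ preserves all weak equivalences.
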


\begin{proof}
We have seen that $\bbL F_!\col\Ho[\mcD^\op,\mcV]\ra\Ho[\mcC^\op,\mcV]$ is fully faithful. We study now when it is moreover essentially surjective, i.e.~when every object of $[\mcC^\op,\mcV]$ is weakly equivalent to the image under $F_!$ of some cofibrant weight. In particular it is necessary that
\[\mcC^c\simeq F_!W\cong W\star_\mcD F_!y_\mcD\cong W\star_\mcD y_\mcC F\]
for some cofibrant weight $W\in[\mcD^\op,\mcV]$.

In the opposite direction observe that the derived counit $(F_!)^\cof F^*\Ra\Id$ is a weak equivalence on each $\mcC^{Fd}$ (again we do not need its fibrant replacement). If $\mcC^c$ is (weakly equivalent to) a homotopy weighted colimit of $y_\mcC F$, i.e.~to a homotopy weighted colimit of a diagram of representables of the form $\mcC^{Fd}$, then the derived counit at $\mcC^c$ will be a weak equivalence too since both $(F_!)^\cof$ and $F^*$ preserve homotopy weighted colimits. Since every object is a homotopy weighted colimit of representables the derived counit will be a weak equivalence on the whole of $\Ho[\mcC^\op,\mcV]$.
\end{proof}

Let us describe the derived counit under the conditions from the theorem at the representable presheaf $\mcC^c$. Since the derived counit is a weak equivalence this will give us in return more information about homotopy absolute homotopy weighted colimits. We denote the cofibrant replacement functor by $Q$.
\[\xymatrix@R=10pt{
& (F_!)^\cof F^*\mcC^c \ar[r] \ar@{=}[d] & F_!F^*\mcC^c \ar[r]^-\varepsilon \ar@{=}[d] & \mcC^c \ar@{=}[d] \\
F^*\mcC^c\mathbin{\widehat\star_\mcD}y_\mcC F \ar@{=}[r] & QF^*\mcC^c\star_\mcD y_\mcC F \ar[r] & F^*\mcC^c\star_\mcD y_\mcC F \ar[r] & \mcC^c
}\]
This says that $\mcC^c$ is in fact canonically weakly equivalent to a derived weighted colimit of $y_\mcC F$ namely that with the weight $F^*\mcC^c$ (or a homotopy weighted colimit with weight the cofibrant replacement thereof). Evaluating at $c'$ the natural map (given by composition)
\[\mcC(Fd,c)\mathbin{\widehat\star_{d\in\mcD}}\mcC(c',Fd)\xlra{\sim}\mcC(c',c)\]
must be a weak equivalence in order for $c$ to be a homotopy absolute homotopy weighted colimit of $F$. Moreover it is clear that $F$ satisfies the conditions of the previous theorem if and only if $F^\op$ does (which should be compared with the well known fact (\cite{KellySchmitt}, Proposition~7.3) that absolute weighted colimits are those which may be at the same time expressed as (absolute) weighted limits).

It is possible to describe this situation in more details for specific examples. We start with the case $\mcV=\sSet$. We say that $c\in\mcC$ is a homotopy retract of an object $c'\in\mcC$ if there exist morphisms $c\xra{f}c'\xra{g}c$ whose composition is homotopic to identity (which simply means that the two vertices of $\mcC(c,c)$ can be joined by a sequence of edges). This is also equivalent to $c$ being a retract of $c'$ in $\pi_0\mcC$.

The following theorem (attributed to Dwyer and Kan) can be found in \cite{GoerssJardine}, Theorem VIII.2.13.

\begin{theorem}
Let $F\col\mcD\ra\mcC$ be a simplicial functor between small simplicial categories. Then the adjunction $F_!\dashv F^*$ is a Quillen equivalence if and only if $F$ is homotopically full and faithful and every object $c\in\mcC$ is a homotopy retract of an object in the image of $F$.
\end{theorem}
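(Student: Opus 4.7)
The plan is to invoke Theorem~\ref{theorem_Dwyer_Kan_absolute} and, under the standing assumption that $F$ is homotopically full and faithful, to identify the condition that every object $c\in\mcC$ be a homotopy absolute homotopy weighted colimit of $F$ with the condition that every such $c$ be a homotopy retract of some $Fd$. Once this identification is in hand the theorem follows directly from Theorem~\ref{theorem_Dwyer_Kan_absolute}.

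For the forward implication, the discussion just before Theorem~\ref{theorem_Dwyer_Kan_absolute} shows that if $c$ is a homotopy absolute homotopy weighted colimit of $F$ then the composition map
\[\mcC(F-,c)\mathbin{\widehat\star_{d\in\mcD}}\mcC(c,F-)\xlra{\sim}\mcC(c,c)\]
is a weak equivalence in $\sSet$. Modelling the left hand side by the simplicial bar construction of Theorem~\ref{thm_homotopy_weighted_colimits} (applicable pointwise since everything in $\sSet$ is cofibrant), its set of $0$-simplices is $\coprod_{d_0\in\ob\mcD}\mcC(Fd_0,c)\times\mcC(c,Fd_0)$ (using $\mcD(d_0)=\sfI=*$), and the augmentation to $\mcC(c,c)$ sends $(g,f)\mapsto gf$. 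Since $\pi_0$ of the realization surjects onto $\pi_0\mcC(c,c)$ through its $0$-simplices, the class $[\id_c]$ is represented by some pair, yielding $f\col c\ra Fd_0$ and $g\col Fd_0\ra c$ with $gf\simeq\id_c$; that is, $c$ is a homotopy retract of $Fd_0$.

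Conversely, given such $f$ and $g$, post-composition yields morphisms $\mcC^f\col\mcC^c\to\mcC^{Fd_0}$ and $\mcC^g\col\mcC^{Fd_0}\to\mcC^c$ in $[\mcC^\op,\sSet]$ with $\mcC^g\mcC^f\simeq\id_{\mcC^c}$ simplicially, so $\mcC^c$ is a simplicial homotopy retract of $\mcC^{Fd_0}=F_!\mcD^{d_0}$. Naturality of the derived counit then presents the arrow $(F_!)^\cof F^*\mcC^c\ra\mcC^c$ as a retract in the arrow category of $\Ho[\mcC^\op,\sSet]$ of the arrow $(F_!)^\cof F^*\mcC^{Fd_0}\ra\mcC^{Fd_0}$, which is a weak equivalence by the homotopically full and faithful assumption (as noted in the proof of Theorem~\ref{theorem_Dwyer_Kan_absolute}). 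Since weak equivalences are closed under retracts, the derived counit at $\mcC^c$ is a weak equivalence as well, and so $W=(F^*\mcC^c)^\cof$ provides a cofibrant weight with $W\star_\mcD y_\mcC F\simeq\mcC^c$, exhibiting $c$ as a homotopy absolute homotopy weighted colimit of $F$.

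The only subtle step is the forward $\pi_0$-extraction, where one must produce honest composable morphisms in $\mcC_0$; this works because the monoidal unit of $\sSet$ is the point, so the $n=0$ level of the bar complex consists of genuine pairs $(g,f)$ and no coherence data must be unravelled. The reverse direction is then a purely formal retract stability argument in the homotopy category.
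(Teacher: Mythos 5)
Your proof is correct and takes essentially the same route as the paper's: the forward direction extracts the retract by applying $\pi_0$ to the weak equivalence $\mcC(Fd,c)\mathbin{\widehat\star_{d\in\mcD}}\mcC(c,Fd)\xra{\sim}\mcC(c,c)$ and lifting $[\id_c]$ to a pair $(g,f)$, and the converse deduces that the derived counit at $\mcC^c$ is a weak equivalence because it is a retract of the derived counit at $\mcC^{Fd_0}$. Your explicit use of the bar model to justify the surjectivity of $\pi_0$ of the $0$-simplices is just a more detailed rendering of the paper's $\pi_0$-coend computation.
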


\begin{proof}
Assuming that $F_!\dashv F^*$ is a Quillen equivalence we apply $\pi_0$ to the weak equivalence
\[\mcC(Fd,c)\mathbin{\widehat\star_{d\in\mcD}}\mcC(c',Fd)\xlra{\sim}\mcC(c',c)\]
and obtain an isomorphism
\[\pi_0\mcC(Fd,c)\times_{d\in\pi_0\mcD}\pi_0\mcC(c',Fd)\xlra{\cong}\pi_0\mcC(c',c).\]
Specializing to $c'=c$ the preimage of the identity comes from a pair of maps $c\xra{i}Fd\xra{r}c$ whose composition $ri\in\mcC(c,c)$ is homotopic to identity. In effect $c$ is a homotopy retract of $Fd$.

If on the other hand $c$ is a homotopy retract of $Fd$ then also $\mcC^c$ is a homotopy retract of $\mcC^{Fd}$ therefore the derived counit at $\mcC^c$ is a weak equivalence as a homotopy retract of such.
\end{proof}

The same proof works for $\Cat$ with $\pi_0$ replaced by the set of isomorphism classes of objects, i.e.~1-morphisms. We obtain the following characterization.

\begin{theorem}
Let $F\col\mcD\ra\mcC$ be a 2-functor between small 2-categories. Then the adjunction $F_!\dashv F^*$ is a Quillen equivalence if and only if $F$ is full and faithful on hom-objects and every object $c\in\mcC$ is a homotopy retract of an object in the image of the 2-functor $F$.
\end{theorem}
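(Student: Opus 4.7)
The plan is to mimic the preceding $\sSet$-enriched proof verbatim, replacing the functor $\pi_0\col\sSet\to\Set$ with $\iso\col\Cat\to\Set$, the functor sending a category to its set of isomorphism classes of objects. The key properties are that $\iso$ is left adjoint to the discrete embedding $\Set\to\Cat$, so preserves all colimits and finite products, and that it sends equivalences of categories---the weak equivalences of the folk model structure---to bijections. First, I would apply Theorem~\ref{theorem_Dwyer_Kan_absolute}, which reduces the claim to verifying that $F$ is homotopically full and faithful and that every $c\in\mcC$ is a homotopy absolute homotopy weighted colimit of $F$. For $\mcV=\Cat$, homotopical fully faithfulness says that each $\mcD(d',d)\to\mcC(Fd',Fd)$ is an equivalence of categories, which matches the stated ``full and faithful on hom-objects'' condition.

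For the homotopy-absoluteness condition I would start from the canonical weak equivalence
\[\mcC(Fd,c)\mathbin{\widehat\star_{d\in\mcD}}\mcC(c',Fd)\xlra{\sim}\mcC(c',c)\]
established just after Theorem~\ref{theorem_Dwyer_Kan_absolute} and apply $\iso$. Using the explicit geometric-realization model of $\widehat\star$ supplied by Theorem~\ref{thm_homotopy_weighted_colimits}---a coequalizer of coproducts of iterated products of hom-categories---together with the preservation properties of $\iso$, this yields the bijection
\[\iso\mcC(Fd,c)\times_{d\in\iso\mcD}\iso\mcC(c',Fd)\xlra{\cong}\iso\mcC(c',c).\]
Specializing to $c'=c$ and lifting $[\id_c]$ along this bijection produces an object $d\in\mcD$ together with 1-morphisms $c\xra{i}Fd\xra{r}c$ satisfying $ri\cong\id_c$ in $\mcC(c,c)$, which exhibits $c$ as a homotopy retract of $Fd$ in the image of $F$.

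Conversely, if $c$ is a homotopy retract of some $Fd$, then $\mcC^c$ is a retract (up to 2-isomorphism, hence up to weak equivalence) of $\mcC^{Fd}$ in $[\mcC^\op,\Cat]$. The proof of Theorem~\ref{theorem_Dwyer_Kan_absolute} shows that the derived counit $(F_!)^\cof F^*\mcC^{Fd}\to\mcC^{Fd}$ is a weak equivalence whenever $F$ is locally an equivalence, and since both $(F_!)^\cof$ and $F^*$ are homotopy functors the class of presheaves at which the derived counit is a weak equivalence is closed under homotopy retracts; it therefore contains $\mcC^c$, and Theorem~\ref{theorem_Dwyer_Kan_absolute} supplies the desired Quillen equivalence. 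The main subtle point is the compatibility of $\iso$ with the derived weighted colimit on the left of the displayed equivalence; this reduces cleanly to preservation of coequalizers, coproducts, and finite products by $\iso$, together with its homotopy-invariance on equivalences, so the obstacle is ultimately mild and essentially bookkeeping.
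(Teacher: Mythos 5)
Your proposal is correct and is essentially the paper's own argument: the paper disposes of this theorem in one line by saying that the $\sSet$-proof goes through verbatim with $\pi_0$ replaced by the set of isomorphism classes of objects, which is exactly the substitution $\pi_0\rightsquigarrow\iso$ you carry out (including the same converse via closure of the derived counit's weak-equivalence locus under homotopy retracts). You merely spell out the verification of the compatibility of $\iso$ with the derived weighted colimit that the paper leaves implicit.
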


A similar characterization can be made for $\mcV=\sAb$ and $\mcV=\Ch_+$. Since both categories are enriched over $\Ab$ we have the biproduct which, being defined equationally, is absolute. In terms of colimits this means that finite coproducts are absolute and it follows that they are also homotopy absolute.

\begin{theorem}
Let $\mcV$ be one of $\sAb$, $\Ch_+$. Let $F$ be a $\mcV$-functor between small locally $\mcV$-flat $\mcV$-categories. Then the adjunction $F_!\dashv F^*$ is a Quillen equivalence if and only if $F$ is homotopically full and faithful and every object $c\in\mcC$ is a homotopy retract of a finite coproduct of objects in the image of $F$.
\end{theorem}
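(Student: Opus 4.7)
The plan is to invoke Theorem~\ref{theorem_Dwyer_Kan_absolute}, which already captures the homotopical full-faithfulness condition and reduces the remaining content to showing that, in the additive enrichments $\mcV = \sAb$ and $\mcV = \Ch_+$, every $c \in \mcC$ is a homotopy absolute homotopy weighted colimit of $F$ if and only if $c$ is a homotopy retract of some finite coproduct $Fd_1 \oplus \cdots \oplus Fd_n$. As explained in the discussion following that theorem, the former is equivalent to the derived counit $(F_!)^\cof F^*\mcC^c \to \mcC^c$ being a weak equivalence, which is the condition I would analyze. The case of $\Ch_+$ then follows via Dold--Kan, which preserves the monoidal and model structures up to weak equivalence.

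For the ``if'' direction I would use the $\Ab$-enrichment to identify $\mcC^{\bigoplus Fd_i} \cong \bigoplus_i \mcC^{Fd_i}$, expressed as a strict weighted colimit $W\star_\mcD y_\mcC F$ with cofibrant weight $W = \bigoplus_i \mcD^{d_i}$. Since both $(F_!)^\cof$ and $F^*$ preserve finite coproducts and the derived counit at each representable $\mcC^{Fd_i}$ is a weak equivalence by the hom-object computation from the proof of Theorem~\ref{theorem_Dwyer_Kan_absolute}, so is the derived counit at $\bigoplus_i \mcC^{Fd_i}$. A homotopy retract $c\to \bigoplus_i Fd_i\to c$ then yields, by naturality, a homotopy-category retract of the derived counit at $\mcC^c$ by the derived counit at $\mcC^{\bigoplus Fd_i}$; since a retract of an isomorphism is an isomorphism, the derived counit at $\mcC^c$ is a weak equivalence.

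For the ``only if'' direction I would start from the explicit weak equivalence
\[\mcC(F-,c)\mathbin{\widehat\star_{d\in\mcD}}\mcC(c',F-)\xlra{\sim}\mcC(c',c)\]
derived right after Theorem~\ref{theorem_Dwyer_Kan_absolute}, and apply the connected components functor $\pi_0\col\sAb\to\Ab$. Since $\pi_0$ is a strong monoidal left adjoint, $\pi_0$ of the derived coend should agree with the ordinary $\Ab$-enriched coend of $\pi_0$'s (using either the bar simplicial model of the derived coend, or equivalently the vanishing in the relevant degree of the higher $\Tor$ terms in the bar spectral sequence). This yields
\[\pi_0\mcC(F-,c)\otimes_{\pi_0\mcD}\pi_0\mcC(c',F-)\xlra{\cong}\pi_0\mcC(c',c)\]
with the isomorphism induced by composition $[f]\otimes[g]\mapsto[f\circ g]$. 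Setting $c' = c$, the class $[1_c]$ is represented by a finite sum $\sum_j[f_j]\otimes[g_j]$ with $f_j\col Fd_j\to c$ and $g_j\col c\to Fd_j$; assembling these through the biproduct into $f\col\bigoplus_j Fd_j\to c$ and $g\col c\to\bigoplus_j Fd_j$, the composition $fg = \sum_j f_jg_j$ is homotopic to $1_c$ and so exhibits $c$ as a homotopy retract of $\bigoplus_j Fd_j$.

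The hardest step, and the one I would take most care with, is justifying that $\pi_0$ of the derived weighted colimit really does compute as the ordinary $\Ab$-enriched coend of $\pi_0$'s; this depends on $\pi_0$ commuting with coproducts, tensor products, and geometric realizations of Reedy cofibrant simplicial objects, each of which is standard but deserves explicit verification. Once this is in hand the rest is a direct transcription of the $\sSet$ argument, simplified by the fact that in an additive enrichment a finite collection of maps $(f_j,g_j)$ assembles into a single pair via the biproduct.
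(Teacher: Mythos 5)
Your proposal is correct and follows essentially the same route as the paper: the paper likewise reduces to Theorem~\ref{theorem_Dwyer_Kan_absolute}, applies $H_0$ (your $\pi_0$) to the derived-counit weak equivalence to obtain an isomorphism of ordinary $\Ab$-enriched coends, writes the identity as a finite sum $\sum r_k i_k$ assembled through the biproduct, and conversely observes that the derived counit is a weak equivalence on finite coproducts of representables and hence on homotopy retracts thereof. The only difference is that you flag and sketch the justification that $H_0$ of the derived coend is the ordinary coend of $H_0$'s, a point the paper leaves implicit.
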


\begin{proof}
The proof is exactly the same with $\pi_0$ replaced by $H_0$ so that
\[H_0\mcC(Fd,c)\otimes_{d\in H_0\mcD}H_0\mcC(c',Fd)\xlra{\cong}H_0\mcC(c',c)\]
yields maps $c\xra{i_k}Fd_k\xra{r_k}c$, $i=1,\ldots,n$, for which $r_1i_1+\cdots+r_ni_n$ is homotopic to $\id_c$. This means that $\mcC^c$ is a homotopy retract of $\mcC^{Fd_1}\oplus\cdots\oplus\mcC^{Fd_n}$.

In the opposite direction the derived counit at $\mcC^{Fd_1}\oplus\cdots\oplus\mcC^{Fd_n}$ is a weak equivalence and thus also at any homotopy retract of it.
\end{proof}

There are more homotopy absolute homotopy weighted colimits over $\Ch$. Let $\mcC$ be the full dg-subcategory of $\Ch$ on the unit $\bbZ$ and its suspension $\bbZ[1]$. Observe that for any object $M\in\mcM$ in a dg-category $\mcM$ it holds $M[1]=\bbZ[1]\otimes M\cong\hom(\bbZ[-1],M)$ whenever one of these exists. Therefore in $[\mcC^\op,\Ch]$ one has
\[\mcC(-,\bbZ[1])\cong\mcC(-,\bbZ)[1]\]
and in particular $\mcC^{\bbZ[1]}$ is a homotopy weighted colimit of a diagram with values only $\mcC^\bbZ$ while $\bbZ[1]$ is certainly not a homotopy retract of a coproduct of $\bbZ$'s. In particular if $\mcD$ is the full subcategory of $\mcC$ on the object $\bbZ$ then the inclusion $\mcD\hra\mcC$ induces a Quillen equivalence
\[[\mcC^\op,\Ch]\xlra{\sim}[\mcD^\op,\Ch].\]
A similar situation occurs for mapping cones (i.e.~homotopy cokernels) and it should follow from results of section~6 of \cite{Keller} -- namely part a) of the first lemma -- that the homotopy absolute homotopy weighted colimits are generated by (de)suspensions, mapping cones (which give together finite coproducts) and homotopy retracts.

A generalization of Theorem~\ref{theorem_Dwyer_Kan_absolute} to the case of diagrams in a cofibrantly generated model $\mcV$-category $\mcM$ is the following.

\begin{theorem}
Let $F\col\mcD\ra\mcC$ be a $\mcV$-functor between small $\mcM$-flat $\mcV$-categories. Then the adjunction
\[F_!\col[\mcD^\op,\mcM]\rightleftarrows[\mcC^\op,\mcM]\loc F^*\]
is a Quillen equivalence if $F$ is homotopically full and faithful and every object $c\in\mcC$ is a homotopy absolute homotopy weighted colimit of $F$.
\end{theorem}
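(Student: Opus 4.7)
The plan is to mirror the proof of Theorem~\ref{theorem_Dwyer_Kan_absolute}, systematically replacing the target $\mcV$ by the general model $\mcV$-category $\mcM$: I will show that the derived functor $\bbL F_!\col\Ho[\mcD^\op,\mcM]\ra\Ho[\mcC^\op,\mcM]$ is both fully faithful and essentially surjective. The local $\mcM$-flat hypothesis, together with cofibrant generation of $\mcM$, guarantees that both projective model structures exist and that $F_!\dashv F^*$ is a Quillen adjunction. Since fibrations and weak equivalences of the projective model structure are pointwise and $F^*$ is precomposition with $F$, the functor $F^*$ preserves all weak equivalences; consequently every ``fibrant replacement on the right'' below may be suppressed. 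The key structural observation is that both $F_!$ and $F^*$ preserve homotopy weighted colimits of cofibrant input: the first because it is $\mcV$-cocontinuous and left Quillen, the second because, being precomposition, it preserves both pointwise cofibrancy and all colimits (and is itself the left adjoint to the right Kan extension $F_*$).

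For full-faithfulness I first check the derived unit on the generators $W=\mcD^d\otimes M$ with $M\in\mcM$ cofibrant, where it is the map of $\mcV$-functors $\mcD(-,d)\otimes M\ra\mcC(F-,Fd)\otimes M$. Pointwise this is $\mcD(d',d)\otimes M\ra\mcC(Fd',Fd)\otimes M$: the hypothesis that $F$ is homotopically full and faithful supplies a weak equivalence on the $\mcV$-factor, and local $\mcM$-flatness combined with the left Quillen bifunctor axiom for $\otimes\col\mcV_0\times\mcM_0\ra\mcM_0$ ensures that tensoring this weak equivalence with the cofibrant $M$ preserves it. For an arbitrary cofibrant $W\in[\mcD^\op,\mcM]$ I then invoke Theorem~\ref{thm_homotopy_weighted_colimits} applied to $W\cong W\star_\mcD y_\mcD$ to present $W$ (up to weak equivalence) as the realization $|\sfB(W\otimes y_\mcD)|$, i.e.\ a homotopy weighted colimit of summands of generator shape; the derived unit is then identified with a morphism between two such realizations whose constituent levelwise maps have already been shown to be weak equivalences on generators.

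For essential surjectivity the same decomposition applied in $[\mcC^\op,\mcM]$ reduces the problem to placing each generator $\mcC^c\otimes M$ (with $M$ cofibrant) in the essential image of $\bbL F_!$. The hypothesis that $c$ is a homotopy absolute homotopy weighted colimit of $F$ provides a cofibrant weight $W_c\in[\mcD^\op,\mcV]$ with a weak equivalence $F_!W_c=W_c\star_\mcD y_\mcC F\simeq\mcC^c$ in $[\mcC^\op,\mcV]$. Using the $\mcV$-cocontinuity isomorphism $F_!(W_c\otimes M)\cong(F_!W_c)\otimes M$ and tensoring the previous weak equivalence with the cofibrant $M$ yields a weak equivalence $F_!(W_c\otimes M)\simeq\mcC^c\otimes M$, with $W_c\otimes M$ cofibrant in $[\mcD^\op,\mcM]$.

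The main obstacle I expect is the propagation step in the second paragraph: $F^*$ is not obviously left Quillen for the projective model structures (since $F$ need not be surjective on objects, projective cofibrancy may fail to be preserved), so one cannot simply invoke ``both functors are left Quillen.'' The cleanest workaround is to stay at the level of the explicit $|\sfB(-)|$-model furnished by Theorem~\ref{thm_homotopy_weighted_colimits}: apply Proposition~\ref{prop_Reedy_cofibrancy} to ensure the simplicial object computing $W$ is Reedy cofibrant; observe that $F_!$, being left Quillen and cocontinuous, preserves this Reedy cofibrancy after passage to the target; and then reduce the weak equivalence of realizations to a levelwise weak equivalence of Reedy cofibrant simplicial objects, which was already established on generators. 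Once this bookkeeping is in place the argument of Theorem~\ref{theorem_Dwyer_Kan_absolute} carries over essentially verbatim.
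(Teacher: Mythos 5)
Your overall strategy---rerunning the Dwyer--Kan argument directly inside $[\mcD^\op,\mcM]$---is genuinely different from the paper's proof, which is a short reduction to the already-established case $\mcM=\mcV$ (Theorem~\ref{theorem_Dwyer_Kan_absolute}): there one observes that $F_!$, $F^*$ and their derived unit and counit are all induced by $\mcV$-valued weights and transformations of weights, independently of the target $\mcM$; since those transformations are weak equivalences by the $\mcV$-case, applying the left Quillen functor $-\star_\mcD X$ (left Quillen in the weight variable for pointwise cofibrant $X$) yields the statement for every $\mcM$ at once. Your direct route is not unreasonable, but as written it has a genuine gap at its very first computation.

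The gap is in the full-faithfulness step on generators. You claim that $\mcD(d',d)\otimes M\ra\mcC(Fd',Fd)\otimes M$ is a weak equivalence because the map of hom-objects is one and because of ``local $\mcM$-flatness combined with the left Quillen bifunctor axiom''. Neither hypothesis delivers this: $\mcM$-flatness of $K$ controls the functor $K\otimes-$ (preservation of (trivial) cofibrations), while the pushout-product axiom makes $-\otimes M$ left Quillen for cofibrant $M$ and hence able to preserve weak equivalences only between \emph{cofibrant} objects of $\mcV$. The hom-objects $\mcD(d',d)$ and $\mcC(Fd',Fd)$ are merely $\mcM$-flat, not cofibrant, so you are asking a left Quillen functor to preserve a weak equivalence between non-cofibrant objects---exactly the kind of statement that is not automatic in a monoidal model category (note that the unit axiom $\bbI^0\otimes M\xra{\sim}\sfI\otimes M$ must be \emph{imposed} even though $\sfI$ is $\mcM$-flat). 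Your proposed $|\sfB(-)|$ workaround does not escape this, because the levelwise maps of the bar constructions computing the unit are again tensors of these same non-cofibrant hom-object equivalences against cofibrant objects of $\mcM$. The paper's proof is organized precisely to avoid the issue: the weights are cofibrantly replaced in the $\mcV$-valued presheaf category \emph{before} anything is tensored with an $\mcM$-diagram, so $-\star_\mcD X$ is only ever applied to weak equivalences between cofibrant weights, and the identification of underived with derived constructions comes from the extra-degeneracy lemma rather than from any ``flat tensor cofibrant computes the derived tensor'' claim. (A secondary, more easily repaired, looseness: placing the generators $\mcC^c\otimes M$ in the essential image does not by itself give essential surjectivity on all cofibrant objects, since homotopy colimits of diagrams of such objects require lifting the diagrams, not just the objects; argue instead, as the paper does, that the derived counit is a weak equivalence---a pointwise condition closed under homotopy weighted colimits. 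Your computation $F_!(W_c\otimes M)\cong(F_!W_c)\otimes M$ itself is fine, since there both sides of the weight equivalence are projectively cofibrant.)
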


\begin{proof}
Both $F_!$ and $F^*$ are given by weighted colimits with respective weights $F_!\mcD^\bullet$ and $F^*\mcC^\bullet$ and thus $(F_!)^\cof$ by a derived weighted colimit or alternatively on pointwise cofibrant diagrams as a homotopy weighted colimit (with the weight the cofibrant replacement of the one mentioned). This is so regardless of the category $\mcM$ and the same happens for the derived unit and counit which are given by transformations of weights. Since one gets a Quillen equivalence for $\mcM=\mcV$ by the previous these tranformations must be weak equivalences and thus one gets a Quillen equivalence for arbitrary $\mcM$.
\end{proof}

\appendix
\section{Extra degenerate augmented simplicial objects}

A simplicial object in $\mcM$ is a functor $X\col\Delta^\op\ra\mcM$. We say that it is \emph{augmented} if there is given a map $X_0\ra X_{-1}$ which coequalizes $d_0,d_1\col X_1\ra X_0$. This data can be organized into a natural transformation $X\ra\const_{\Delta^\op}X_{-1}$ from $X$ to the constant simplicial object at $X_{-1}$. More conveniently for us it may be also organized into a functor $\Delta_\varepsilon^\op\ra\mcM$ from the opposite of the category $\Delta_\varepsilon$ of all finite ordinals with the empty ordinal denoted by $[-1]$. When $X$ is augmented by $X_{-1}$ as above we write for simplicity $X\ra X_{-1}$.

We say that the augmented simplicial object $X\ra X_{-1}$ \emph{has extra degeneracy} if there are given $s_{-1}\col X_{n-1}\ra X_n$ for all $n\geq 0$ satisfying certain relations (see III.5 of \cite{GoerssJardine}). These are best formulated by describing the structure of an extra degenerate augmented simplicial object again as a contravariant functor $\widetilde X\col\Delta_\bot^\op\ra\mcM$. The category $\Delta_\bot$ has as objects non-empty finite ordinals but we prefer to write them in the following way $[n]_*=\{-1,0,\ldots,n\}$ so that $X_n=\widetilde X[n]_*$. Morphisms are all monotone maps which preserve the bottom element $-1$. In this notation the extra degeneracy map $s_{-1}$ comes from a morphism $s^{-1}\col[n]_*\ra[n-1]_*$ that repeats $-1$ twice and is otherwise injective. We have a canonical embedding $\Delta_\varepsilon\ra\Delta_\bot$ sending $[n]$ to $[n]_*$ and restriction along this embedding associates to $\widetilde X$ the ``underlying augmented simplicial object''.

\begin{proposition}\label{prop_extra_degenerate}
Let $\widetilde X\col\Delta_\bot^\op\ra\mcM$ be pointwise cofibrant and denote its underlying augmented simplicial object by $X\ra X_{-1}$. Then the canonical map
\[\hocolim_{\Delta^\op}X\ra X_{-1}\]
is a weak equivalence.
\end{proposition}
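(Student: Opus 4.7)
The plan is to use the extra degeneracies to exhibit $X$ as simplicially homotopy equivalent to the constant simplicial object $\const_{\Delta^\op} X_{-1}$, and then to push this equivalence through the homotopy colimit.

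First, I would extract the homotopy data directly from $\widetilde X$. The object $[-1]_*$ is a zero object of $\Delta_\bot$, so there are unique morphisms $[-1]_* \to [n]_*$ and $[n]_* \to [-1]_*$ for every $n \geq 0$; applying $\widetilde X$ produces simplicial maps $\varepsilon \col X \to \const_{\Delta^\op} X_{-1}$ (the augmentation) and $\eta \col \const_{\Delta^\op} X_{-1} \to X$ (iterated extra degeneracy). Uniqueness of $\id_{[-1]_*}$ forces $\varepsilon \circ \eta = \id$, while the classical formula for contractions built from $s_{-1}$ (as in \cite{GoerssJardine}, III.5) exhibits $\eta \circ \varepsilon$ as simplicially homotopic to $\id_X$; the homotopy relations all follow from the composition law in $\Delta_\bot^\op$, and I would organize this homotopy as a map $X \otimes \Delta^1 \to X$ using the $\sSet$-tensoring on $\simp\mcM_0$.

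Next I would apply $\hocolim_{\Delta^\op}$. Since $\widetilde X$ is pointwise cofibrant, so is $X$, and the homotopy colimit can be modelled by $\bbI \star_{\Delta_+} X$ as recalled in the paper. The simplicial homotopy of the previous step realizes to an ordinary left homotopy after $\hocolim_{\Delta^\op}$, using $|\Delta^1|$ being weakly equivalent to the unit $\sfI$ together with a Fubini-style interchange between the two realizations implicit in $|X \otimes \Delta^1|$; consequently $\hocolim_{\Delta^\op} \varepsilon$ becomes a weak equivalence. Finally, since $\Delta^\op$ has $[0]$ as an initial object, its nerve is weakly contractible, so the conical homotopy colimit of $\const_{\Delta^\op} X_{-1}$ is weakly equivalent to $X_{-1}$ by the compatibility axiom $\bbI^0 \otimes X_{-1} \xra{\sim} X_{-1}$; composing produces the required weak equivalence $\hocolim_{\Delta^\op} X \to X_{-1}$.

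The main obstacle is justifying that $\hocolim_{\Delta^\op}$ sends the simplicial homotopy of the first step to a genuine homotopy in $\mcM$. In a general enriched setting the $\sSet$-tensoring on $\mcM$ is only a homotopy action, so this step reduces to a Fubini-type interchange of two iterated realizations together with the flatness properties that make $X \otimes \Delta^1$ behave well; a possible alternative, if the cylinder language becomes cumbersome, is to avoid it entirely and construct a zig-zag of weak equivalences from the explicit simplicial contraction via an auxiliary bisimplicial object built from $\widetilde X$.
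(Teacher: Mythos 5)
Your first step is fine: $[-1]_*$ is indeed a zero object of $\Delta_\bot$, so the augmentation $\varepsilon\col X\ra\const_{\Delta^\op}X_{-1}$ and the section $\eta\col\const_{\Delta^\op}X_{-1}\ra X$ with $\varepsilon\eta=\id$ both come from $\widetilde X$, and the classical contraction built from $s_{-1}$ does give a simplicial homotopy $\eta\varepsilon\simeq\id_X$. The final step ($\hocolim_{\Delta^\op}$ of a constant pointwise cofibrant diagram over a category with contractible nerve recovers the value) is also fine. The problem is the middle step, which you yourself flag as ``the main obstacle'' and then do not close: you need that $\hocolim_{\Delta^\op}$ carries a simplicial homotopy between pointwise cofibrant simplicial objects to an identification of the two maps in $\Ho\mcM$. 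In the setting of this paper $\mcM$ is a model $\mcV$-category over an arbitrary base, the $\sSet$-action of \eqref{eqn_simplicial_enrichment} is only associative and unital up to weak equivalence, and the realization is taken against the cosimplicial resolution $\bbI$; so the ``Fubini-style interchange'' you invoke for $|X\otimes\Delta^1|$ is not an isomorphism (levelwise one is comparing $\coprod_{\Delta^1_n}\bbI^n$ with a genuine product $\Delta^1\times\Delta^n$ pushed into $\mcV$), and making $\hocolim_{\Delta^\op}(X\otimes\Delta^1)$ into an honest cylinder on $\hocolim_{\Delta^\op}X$ --- with the end inclusions a cofibration and the collapse a weak equivalence --- is a nontrivial piece of cosimplicial-frame bookkeeping. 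This statement is true, but it is essentially a lemma of the same weight as the proposition itself, and neither of your two suggested repairs is carried far enough to count as a proof.

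For comparison, the paper avoids simplicial homotopies altogether: it shows that both $\Delta\hra\Delta_\bot$ (a left adjoint, since $\Delta_\bot(J[m],[n]_*)\cong\Delta([m],[n+1])$) and $[-1]_*\hra\Delta_\bot$ (an initial object) are homotopy left cofinal, so on opposite categories both induce weak equivalences $\hocolim_{\Delta^\op}X\xra{\sim}\hocolim_{\Delta_\bot^\op}\widetilde X\xla{\sim}X_{-1}$, and then identifies this zig-zag with the canonical map. That argument uses only Hirschhorn's cofinality theorem and never needs to realize a homotopy, which is exactly why it works over a general base $\mcV$. If you want to salvage your route, you must either prove the realization-of-simplicial-homotopies lemma in this enriched generality (e.g.\ via the bisimplicial object you allude to, together with the flatness of $\Delta^1_n\cdot\sfI$), or switch to the cofinality argument.
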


\begin{proof}
Consider the following diagram in $\Cat$.
\[\xymatrix{
\Delta \ar@{c->}[r]^-J & \Delta_\bot & [-1]_*. \ar@{d->}[l]
}\]
We claim that both functors are homotopy left cofinal (see Definition~19.6.1 of \cite{Hirschhorn}). The $[-1]_*$ stands for the full subcategory of $\Delta_\bot$ on this object and has only the identity morphism.

We will first show how this yields a proof of the proposition. The functors on the opposite categories are homotopy right cofinal and therefore induce weak equivalences on homotopy colimits,
\[\hocolim\nolimits_{\Delta^\op}X\xlra{\sim}\hocolim\nolimits_{\Delta_\bot^\op}\widetilde X\xlla{\sim}X_{-1}.\]
To relate this cospan to the canonical map from the statement observe that there is also a unique functor $\Delta\ra[-1]_*$ fitting into
\[\xymatrix@C=10pt{
\Delta \ar[rr] \ar[dr] & & [-1]_* \ar[dl] & & \hocolim_{\Delta^\op}X \ar[rr] \ar[dr]_-{\sim} & & X_{-1} \ar[dl]^-{\sim} \\
& \Delta_\bot \POS[];[u]**{}?(.6)*{\scriptstyle\Ra} & & & & \hocolim_{\Delta_\bot^\op}\widetilde X \POS[];[u]**{}?(.6)*{\scriptstyle\Ra}
}\]
where the arrow $\Ra$ denotes a natural transformation in the first triangle while a left homotopy in the second triangle. Thus the top arrow -- the map from the statement -- has to be a weak equivalence too.

It remains to prove homotopy left cofinality of the above functors. The case of $[-1]_*$ is implied by the fact that $[-1]_*$ is the initial object of $\Delta_\bot$. Now the functor $J$ is a left adjoint
\[\Delta_\bot(J[m],[n]_*)\cong\Delta([m],[n+1])\]
hence $J\da[n]_*\cong\Delta\da[n+1]$ has a terminal object and so $J$ is also homotopy left cofinal.
\end{proof}

\begin{remarks}\hfill
\begin{enumerate}
\item
There is an obvious variation: if the underlying augmented simplicial object $X\ra X_{-1}$ is Reedy cofibrant (which simply means $X$ Reedy cofibrant and $X_{-1}$ cofibrant) then the canonical map $|X|\ra X_{-1}$ is a weak equivalence (of cofibrant objects).

\item
The proposition implies that homotopy colimits of simplicial objects which admit extra degeneracy are homotopy absolute. This means that for any functor $F$ preserving cofibrant objects and weak equivalences between them the canonical map
\[\hocolim\nolimits_{\Delta^\op}FX\lra F\hocolim\nolimits_{\Delta^\op}X\]
is a weak equivalence whenever the simplicial object $X$ admits an extra degeneracy.

\item
Alternatively one may require extra degeneracy maps $s_n\col X_{n-1}\ra X_n$. This leads to the category $\Delta_\top$ with extra top elements adjoined and asked to be preserved by the morphisms. Everything works exactly the same and in fact $\Delta_\top\cong\Delta_\bot$.
\end{enumerate}
\end{remarks}

\section{Cubical diagrams and their pushout corner maps}

For a finite set $S$ we consider the category $\mcP(S)$ of all subsets of $S$ and the category $\mcP_0(S)$ of all proper subsets. We call any diagram $X\col\mcP(S)\ra\mcM$ an \emph{$S$-cubical diagram}. We denote the Kan extension of its restriction to $\mcP_0(S)$ by $X_0$. Since $\mcP_0(S)$ is full $X_0(I)=X(I)$ for $I\in\mcP_0(S)$ and $X_0(S)=\colim_{\mcP_0(S)}X|_{\mcP_0(S)}$. The pushout corner map of $X$ is the canonical map
\[\pcm X\col X_0(S)\lra X(S).\]

Suppose that $S=S_1\sqcup S_2$. We may then compute the pushout corner map for $X$ by considering the pushout corner maps in the two ``directions'' $S_1$ and $S_2$. Concretely observe that
\[\mcP(S_1\sqcup S_2)\cong\mcP(S_1)\times\mcP(S_2)\]
and consider the left Kan extension of the restriction to $\mcP_0(S_1)\times\mcP(S_2)$ and denote it by $X_1$ and similarly the left Kan extension of the restriction to $\mcP(S_1)\times\mcP_0(S_2)$ will be denoted by $X_2$ and the one for $\mcP_0(S_1)\times\mcP_0(S_2)$ by $X_{12}=(X_1)_2$. The first two may be computed by fixing one variable and considering $X$ as a functor of the other variable and computing the pushout corners in that variable. We obtain a square
\[\xymatrix{
X_{12}(S) \ar[r] \ar[d] & X_1(S) \ar[d] \\
X_2(S) \ar[r] & X(S)
}\]
We will see that its pushout corner map is canonically isomorphic to $\pcm X$. In fact we have $X_1(S)=\colim X|_{\mcP_0(S_1)\times\mcP(S_2)}$ and similarly for the remaining corners. The pushout is therefore isomorphic to the colimit of the restriction to the union
\[(\mcP_0(S_1)\times\mcP(S_2))\sqcup_{(\mcP_0(S_1)\times\mcP_0(S_2))}(\mcP(S_1)\times\mcP_0(S_2))=\mcP_0(S)\]
and finally the pushout corner map is $X_0(S)\ra X(S)$ as claimed.

If $X$ is an $S$-cubical diagram in $\mcM$, $Y$ is a $T$-cubical diagram in $\mcN$ and there is given a bifunctor $-\odot-\col\mcM\otimes\mcN\ra\mcQ$ then we denote by $X\odot Y$ the resulting ($S\sqcup T$)-cubical diagram in $\mcQ$ which has, for $I\in\mcP(S)$ and $J\in\mcP(T)$,
\[(X\odot Y)(I\sqcup J)=X(I)\odot Y(J).\]
Under the assumption that the bifunctor $-\odot-$ is cocontinuous in each variable the above square becomes
\[\xymatrix@C=50pt{
X_0(S)\odot Y_0(T) \ar[r]^{\id\odot(\pcm Y)} \ar[d]_{(\pcm X)\odot\id} & X_0(S)\odot Y(T) \ar[d]^{(\pcm X)\odot\id} \\
X(S)\odot Y_0(T) \ar[r]_{\id\odot(\pcm Y)} & X(S)\odot Y(T)
}\]
and we may conclude
\[\pcm(X\odot Y)\cong\pcm((\pcm X)\odot(\pcm Y)).\]

A cubical diagram is called \emph{cofibrant} if all the pushout corner maps of all its faces are cofibrations. In particular for 0-faces this means that all the objects appearing in the diagram are cofibrant, for 1-faces this means that all the maps appearing in the diagram are cofibrations. These are precisely the cofibrant objects in the Reedy=projective structure on the cubical diagrams.

Suppose now that in addition to $-\odot-$ being cocontinuous in each variable it is also a left Quillen bifunctor. Let both $X$ and $Y$ be cofibrant cubical diagrams and consider $X\odot Y$. Since its faces are the $\odot$-products of faces of $X$ and $Y$ we may compute the respective pushout corner maps from the above square. In particular they are all cofibrations.

We say that a cofibrant cubical diagram is \emph{homotopy cocartesian} if its pushout corner map is a weak equivalence (and thus a trivial cofibration). By the very same argument the $\odot$-product of two cofibrant cubical diagrams one of which is homotopy cocartesian is again homotopy cocartesian.

\begin{proof}[Proof of Proposition~\ref{prop_Reedy_cofibrancy}]
We need to compute the latching objects of $\sfB F$ and their maps into $\sfB F$. These are clearly certain coproducts
\[L_n\sfB F=\coprod_{c_0,\ldots,c_n}\mcC_\mathrm{deg}(c_0,\ldots,c_n)\otimes F(c_n,c_0)\]
which we will describe now. Consider the 1-cubes in $\mcV$
\[X_i=\left\{\begin{array}{ll}
0\ra\mcC(c_{i-1},c_i) & \textrm{ if }c_{i-1}\neq c_i \\
\sfI\ra\mcC(c_{i-1},c_i) & \textrm{ if }c_{i-1}=c_i
\end{array}\right.\]
and $Y$ which is simply $0\ra F(c_n,c_0)$. They clearly depend on $c_0,\ldots,c_n$ but we will not reflect this in the notation. The map in question $L_n\sfB F\ra\sfB_nF$ is easily isomorphic to
\[\coprod_{c_0,\ldots,c_n}\pcm(X_1\otimes\cdots\otimes X_n\otimes Y).\]
Since each $X_i$ is $\mcM$-flat, $Y$ is cofibrant and $-\otimes-\col\mcV\otimes\mcM\ra\mcM$ is ``left Quillen'' with respect to $\mcM$-flat maps in $\mcV$ and cofibrations in $\mcM$ the claim is proved.
\end{proof}

\section{Removing tensors with simplices}

Let again $\mcV=\sSet$ and $\mcC\subseteq\mcM^\cof$ a full subcategory of a simplicial model category $\mcM$ which is homotopy dense. We saw in this situation that $\Delta\ltimes\mcC$ is conical homotopy dense and that in general $\mcC$ is not and needs to be enlarged. We will show that if $\mcC$ is closed under tensor products with all standard simplices $\Delta^k$ we do not need to add them again. This result may be applied to presheaves over $\sAb$ to replace the non-full subcategory $\Delta\ltimes\mcF$ by a (bigger) full one -- the closure of $\mcF$ under tensors with simplices.

\begin{lemma} The following statements hold.
\begin{enumerate}
\item[(a)] If $\mcC$ is closed in $\mcM$ under taking tensors with simplices then $\mcC$ is conical homotopy dense.
\item[(b)] Let $\mcD$ denote the closure of $\mcC$ in $\mcM$ under taking tensors with simplices. Then $\mcD$ is still homotopy dense and by the previous point also conical homotopy dense.
\end{enumerate}
\end{lemma}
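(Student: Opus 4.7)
The strategy is to prove (a) using homotopy density of $\mcC$ together with the decomposition Theorem~\ref{thm_homotopy_weighted_colimits}, and then to deduce (b) from (a) after establishing homotopy density of $\mcD$ separately.

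For (a), fix a fibrant $M\in\mcM$. Homotopy density of $\mcC$ gives $\mcM(J-,M)^\cof\star_\mcC J\xrightarrow{\sim}M$, and the decomposition formula (\ref{eqn_second_replacement}) presents the left side as the realization of a simplicial object whose $n$-th level is the coproduct $\coprod \Delta^{k_0}\otimes Jc_0$ indexed by chains $\Delta^{k_0}\otimes\mcC^{c_0}\to\cdots\to\Delta^{k_n}\otimes\mcC^{c_n}\to\mcM(J-,M)$ in $[\mcC^\op,\sSet]$. Closure of $\mcC$ under simplicial tensors identifies each $\Delta^{k_0}\otimes Jc_0$ with $J(c_0\otimes\Delta^{k_0})\in J(\mcC_0)$, and the tensor--hom adjunction repackages the indexing chains as chains of morphisms $c_i\otimes\Delta^{k_i}\to c_{i+1}$ in $\mcC_0$ capped by a morphism $J(c_n\otimes\Delta^{k_n})\to M$ in $\mcM_0$. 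I would then compare this realization with the Bousfield--Kan expression $\hocolim_{J_0\downarrow M}J_0P_M\simeq\mcM_0(J-,M)^\cof\star_{\mcC_0}J_0$, using the assignment $(c_i,k_i)\mapsto c_i\otimes\Delta^{k_i}$ on indexing chains; the main obstacle is proving that this comparison is a levelwise weak equivalence, which I expect to handle by an extra-degeneracy argument (Proposition~\ref{prop_extra_degenerate}) internalizing into $\mcM$ the earlier proof of conical homotopy density of $(\Delta\ltimes\mcC)_0\subseteq[\mcC^\op,\sSet]$.

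For (b), $\mcD$ is tautologically closed under simplicial tensors, so part (a) delivers conical homotopy density as soon as $\mcD$ is shown to be homotopy dense. Write $J_\mcC\col\mcC\hra\mcM$ and $J_\mcD\col\mcD\hra\mcM$ for the inclusions, and let $i\col\mcC\hra\mcD$ satisfy $J_\mcC=J_\mcD\circ i$. The crucial observation is that every object $c\otimes K\in\mcD$ (with $c\in\mcC$ and $K\in\sSet$) admits a weak equivalence $c\otimes K\xrightarrow{\sim}c$ induced by $K\to\Delta^0$, since $\otimes$ is left Quillen and $K\to\Delta^0$ is a weak equivalence between cofibrant simplicial sets. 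Writing $L_\mcD=-\star_\mcD J_\mcD$ and $L_\mcC=-\star_\mcC J_\mcC$, the identity $L_\mcD\circ i_!=L_\mcC$ together with homotopy density of $\mcC$ gives $L_\mcD\bigl((i_!\mcM(J_\mcC-,M))^\cof\bigr)\simeq M$, so it suffices to show that the canonical comparison of weights $i_!\mcM(J_\mcC-,M)\to\mcM(J_\mcD-,M)$ is a pointwise weak equivalence in $[\mcD^\op,\sSet]$. On $c\in\mcC$ this is an isomorphism by fullness of $i$; on $c\otimes K\in\mcD$ the coend formula and Yoneda identify it with the canonical map $\int^{c'\in\mcC}\mcC(c,c')^K\otimes\mcM(c',M)\to\mcM(c,M)^K$, a weak equivalence because $K$ is contractible and $\mcM(c,M)$ is fibrant, so both sides reduce up to weak equivalence to $\mcM(c,M)$. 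The hardest part will be the cofibrancy bookkeeping needed to ensure that this ordinary coend computes the derived coend, so that the pointwise equivalence of weights passes through $L_\mcD$ after cofibrant replacement.
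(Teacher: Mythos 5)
Part (a) has a genuine gap exactly at the step you flag as the main obstacle. After unwinding \eqref{eqn_second_replacement}, what remains to be shown is that the functor $(\Delta\ltimes\mcC)_0\da M\to\mcC_0\da M$ induced by $(k,c)\mapsto\Delta^k\otimes c$ induces a weak equivalence on homotopy colimits of $JP_M$. Your plan to prove this as a \emph{levelwise} weak equivalence of bar constructions, finished off by an extra-degeneracy argument, cannot work: the two bar constructions are indexed by different categories, and already in simplicial degree zero the map
\[\coprod_{\substack{(k,c)\\ \Delta^k\otimes Jc\to M}}\Delta^k\otimes Jc\lra\coprod_{\substack{c_0\\ Jc_0\to M}}Jc_0\]
is a many-to-one reindexing of summands rather than a weak equivalence, and neither bar construction is an augmentation of the other in a way that admits an extra degeneracy, so Proposition~\ref{prop_extra_degenerate} does not apply. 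The missing idea, which is how the paper proves (a), is that $(k,c)\mapsto\Delta^k\otimes c$ is the left adjoint of an adjunction $(\Delta\ltimes\mcC)_0\rightleftarrows\mcC_0$ \emph{over} $\mcM$ (right adjoint $c\mapsto(0,c)$, counit an isomorphism); this lifts to an adjunction of the comma categories over $M$, still over $\mcM$, and an adjunction over $\mcM$ induces a homotopy equivalence of the two homotopy colimits (the functors give the maps, the unit and counit give the homotopies). Without this, or an equivalent homotopy cofinality argument, (a) is not established.

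Your outline for (b) is essentially sound and is a mirror image of the paper's: you replace the weight (comparing $i_!\mcM(J_\mcC-,M)$ with $\mcM(J_\mcD-,M)$) where the paper replaces the diagram (showing $I_!J^\cof\to J'$ is a weak equivalence); both reduce to an isomorphism on the image of $\mcC$ by fullness together with an extension to all of $\mcD$. The weak point is your verification at $c\otimes K$ by an explicit coend computation: $\int^{c'}\mcC(c,c')^K\times\mcM(c',M)$ does not simplify by co-Yoneda since $\mcC(c,c')^K$ is not representable in $c'$; the hom-objects $\mcC(c,c')=\mcM(c,c')$ need not be fibrant (the $c'$ are only cofibrant), so $\mcC(c,c')^K\to\mcC(c,c')$ need not be a weak equivalence even for contractible $K$; and the ordinary coend is not homotopy invariant. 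This is more than ``cofibrancy bookkeeping''. The paper sidesteps all of it by observing that $c\otimes K$ is \emph{simplicially homotopy equivalent} to $c$ in $\mcD$ (tensor the contraction of $K$ with $c$) and that all functors and natural transformations in sight are simplicial, hence preserve simplicial homotopy equivalences; a natural transformation that is a weak equivalence on the image of $\mcC$ is therefore one on all of $\mcD$. You should substitute that argument for the coend computation.
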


\begin{proof}
We express the first statement as the claim that
\[\hocolim_{\mcC_0\da M}JP_M\lra\hocolim_{(\Delta\ltimes\mcC)_0\da M}\widetilde J\widetilde P_M\]
is a weak equivalence where $P_M\col\mcC_0\da M\ra\mcC_0$ is the projection functor, $J\col\mcC\ra\mcM$ is the embedding while $\widetilde P_M$ and $\widetilde J$ are the variants for $(\Delta\ltimes\mcC)_0$. Observe that there is an adjunction
\[\xymatrix{
(\Delta\ltimes\mcC)_0 \ar@<2pt>[r] & \mcC_0 \ar@<2pt>[l]
}\]
whose left adjoint is $(k,c)\mapsto\Delta^k\otimes c$ and the right adjoint $c\mapsto(0,c)$. The counit of the adjunction is an isomorphism while the unit is
\[(k,c)\lra(0,\Delta^k\otimes c)\]
given by the universal $\Delta^k\ra\mcC(c,\Delta^k\otimes c)$. Since the adjuntion is over $\mcM$ (via the functors $J$ and its extension $\widetilde J$) it induces
\[\xymatrix{
(\Delta\ltimes\mcC)_0\da M \ar@<2pt>[r] & \mcC_0\da M \ar@<2pt>[l]
}\]
which is still over $\mcM$ this time via functors $\widetilde J\widetilde P_M$ and $JP_M$. Therefore this adjunction induces a homotopy equivalence of the homotopy colimits (functors give the maps and natural tranformations the homotopies).

We compare now the homotopy density of $\mcC$ and $\mcD$ via weighted homotopy colimits where we denote the various embeddings as in the diagram
\[J\col\mcC\xlra{I}\mcD\xlra{J'}\mcM.\]
The derived counit for $\mcC$ is a weak equivalence
\[M\xlla{\sim}\mcM(J-,M)\mathbin{\widehat{\star}_\mcC}J=I^*\mcM(J'-,M)\star_\mcC J^\cof\cong\mcM(J'-,M)\star_\mcD I_!J^\cof.\]
We claim now that the natural transformation $I_!J^\cof\ra J'$ adjoint to the cofibrant replacement $J^\cof\ra J=I^*J'$ is a weak equivalence so that $I_!J^\cof$ may be taken as $(J')^\cof$ and the above derived counit for $\mcC$ is in fact the same as that for $\mcD$. Since $I$ is full the natural transformation $J^\cof\ra I^*I_!J^\cof$ is an isomorphism and in the diagram
\[J^\cof\xlra\cong I^*I_!J^\cof\lra I^*J'=J\]
(whose composition is the augmentation $J^\cof\xra\sim J$) the second map must be a weak equivalence. This means that $I_!J^\cof\ra J'$ is a weak equivalence on the image of $I$. But every object of $\mcD$ is simplicially homotopy equivalent to an object in $\mcC$. Since the functors and transformations are simplicial the claim is proved.
\end{proof}

Together with Lemma~\ref{lem_conical_dense_implies_simplicial} we proved 
\begin{scorollary}
Suppose that a full subcategory $\mcC\subseteq\mcM^\cof$ of a simplicial model category $\mcM$ is closed under tensors with simplices. Then the following are equivalent
\begin{enumerate}
\item[(a)] $\mcC$ is conical homotopy dense,
\item[(b)] $\mcC$ is simplicially homotopy dense.
\end{enumerate}
\end{scorollary}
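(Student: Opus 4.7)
The corollary is essentially a packaging of two results proved just above, so my plan is to present it as the combination of the two directions, citing Lemma~\ref{lem_conical_dense_implies_simplicial} for one and part (a) of the preceding lemma for the other. The hypothesis that $\mcC \subseteq \mcM^\cof$ is a full subcategory closed under tensors with simplices is exactly what is needed to make both implications go through without any additional enlargement of $\mcC$.

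For the implication \textbf{(a) $\Rightarrow$ (b)}, I would apply Lemma~\ref{lem_conical_dense_implies_simplicial} directly: it states that for any embedding of a full subcategory $J \col \mcC \ra \mcM$ of a simplicial model category, conical homotopy density implies simplicial (enriched) homotopy density. The proof of that lemma uses the simplicial decomposition $\mcM(J-,M) \cong |n \mapsto \mcM_0(J-,M^{\Delta^n})|$ together with the geometric realization of a homotopy-constant simplicial object; no closure assumption is needed for this direction. So (a) $\Rightarrow$ (b) is immediate.

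For the converse \textbf{(b) $\Rightarrow$ (a)}, the key point is part (a) of the lemma proved immediately before the corollary. That lemma is stated for a homotopy dense $\mcC$ (this was the running assumption of its subsection), and under the hypothesis that $\mcC$ is closed under tensors with simplices it concludes that $\mcC$ is conical homotopy dense; its proof uses the adjunction between $(\Delta\ltimes\mcC)_0$ and $\mcC_0$ over $\mcM$ to identify the conical homotopy colimits indexed by the two comma categories. Applying this under hypothesis (b) — which is precisely simplicial (enriched) homotopy density of $\mcC$ — yields (a).

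There is essentially no obstacle here: the heavy lifting was already carried out in the two lemmas above (the simplicial-to-conical comparison via the Yoneda-type adjunction, and the conical-to-simplicial comparison via decomposition of hom-simplicial-sets into their simplices). The only thing to verify is terminological: that what the preceding lemma calls ``homotopy dense'' agrees with what the corollary calls ``simplicially homotopy dense,'' which is immediate from the definitions since the ambient enrichment is in $\sSet$. Thus the proof reduces to a single line citing the two results and can be given as \emph{Follows immediately from Lemma~\ref{lem_conical_dense_implies_simplicial} for (a) $\Rightarrow$ (b) and from part (a) of the preceding lemma for (b) $\Rightarrow$ (a).}\qed
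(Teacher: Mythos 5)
Your proposal is correct and coincides with the paper's own argument: the corollary is stated immediately after the sentence ``Together with Lemma~\ref{lem_conical_dense_implies_simplicial} we proved,'' i.e.\ the author likewise obtains (a)~$\Rightarrow$~(b) from Lemma~\ref{lem_conical_dense_implies_simplicial} and (b)~$\Rightarrow$~(a) from part~(a) of the preceding lemma, whose standing hypothesis of (simplicial) homotopy density is exactly condition~(b). Your terminological remark identifying ``homotopy dense'' with ``simplicially homotopy dense'' in the $\sSet$-enriched setting is also the intended reading.
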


\end{document}